\documentclass{article}
\usepackage{graphicx} % Required for inserting images

\usepackage[a4paper, top=3cm,bottom=3cm,left=2.5cm,right=3.5cm,marginparwidth=1.75cm]{geometry}
\usepackage{amsthm,amsmath,amssymb}
\usepackage{mathrsfs}
\usepackage{indentfirst}
\setlength{\parindent}{2em}
\usepackage{amsfonts}
\usepackage{authblk}

\usepackage{amsmath}
\usepackage{cleveref}
\usepackage{enumerate}
\usepackage{enumitem}
\usepackage{indentfirst}
% 改变 "and" 后面的显示形式

\usepackage{ulem}

\usepackage{xcolor}

\newtheorem{theorem}{Theorem}
\newtheorem{lemma}{Lemma}
\newtheorem{claim}{Claim}

\newtheorem*{Proof}{Proof}
\newtheorem{definition}{Definition}

\newtheorem{observation}{Observation}

\newenvironment{Proof*}
  {\begin{Proof}}
  {\end{Proof}}
\title{Planar graphs without cycles of length 4 or 5 are $(7m:2m)$-DP-colorable}
\author{Xiaoyan Xu \qquad  Xuding Zhu\thanks{Grant number: NSFC 12371359} \\
School of Mathematical Sciences, Zhejiang Normal University}
\date{\today}

\begin{document}

\maketitle
\begin{abstract}
  It was conjectured by Steinberg in 1976  that   planar graphs without cycles of length 4 or 5 are 3-colorable. This conjecture  attracted a substantial amount of attention and was finally refuted by  Cohen-Addad,  Hebdige, Kr\'{a}l',   Li and Salgado in 2017. Although Steinberg's conjecture is settled, coloring of this family of graphs, as well as some other families of planar graphs forbidding certain cycle lengths have been attracting a lot of recent attention and many challenging problems remain open. One problem of interest is multiple coloring and multiple list coloring of this family of graphs.  It was proved by Dv\v{o}r\'{a}k and Hu that 
  planar graphs without cycles of length 4 or 5  are $(11,3)$-colorable, and this result was improved by Wang, who proved  that graphs in this family are $(7:2)$-colorable. On the other hand, it was proved by Xu and Zhu that for every positive integer $m$, there is a graph in this family which is not $(3m + \lfloor \frac{m-1}{12} \rfloor, m)$-choosable. 
  In this paper, we prove that for any positive integer $m$, graphs in this family are $(7m:2m)$-DP-colorable, and hence $(7m,2m)$-choosable.  
\end{abstract}

\section{Introduction}

One problem of interest in planar graph coloring is which planar graphs are 3-colorable, and which planar graphs are 3-choosable.  For a sequence $k_1,k_2, \ldots, k_q$ of positive integers, we denote by $\mathcal{P}_{k_1,k_2, \ldots, k_q}$ the family of connected planar graphs without cycles of length $k_i$ for $i=1,2,\ldots, q$.
The classical Gr\"{o}tzsch Theorem \cite{MR116319} asserts graphs in $\mathcal{P}_{3}$  are 3-colorable.  In 1976, Steinberg conjectured that graphs in $\mathcal{P}_{4,5}$ are 3-colorable   (see \cite{MR3004485}). This conjecture attracted considerable attention. As an approach to this conjecture,  Erd\H{o}s proposed the problem in 1991 to determine the smallest integer  $k$ such that every  graph in $\mathcal{P}_{4,5, \ldots, k}$ is  3-colorable. It was proved by Abbott and Zhou  that $k \le 11$  \cite{MR1148923}, and this bound was improved by Sanders and Zhao  to $k \le 9$ \cite{MR1326930}, and further improved by Borodin, Glebov, Raspaud and Salavatipour  to $k \le 7$ \cite{MR2117940}. Many other relaxations or variations of Steinberg's conjecture have been proved in the literature. For example, it is known   that  for $4 < i < j < 10$  and $(i,j) \notin \{(5,6), (7,8), (5,9),(8,9)\}$,   graphs in $\mathcal{P}_{4,i,j}$  are 3-colorable (see \cite{KangJinZhu} for detailed references). 
In 2017,  Cohen-Addad,  Hebdige, Kr\'{a}l',   Li and Salgado \cite{MR3575214} proved that 
Steinberg's conjecture is false. Now it remains a challenging open question whether graphs in $\mathcal{P}_{4,5,6}$  are 3-colorable.  

For list coloring, Thomassen's classical result asserts that planar graphs of girth at least 5 are 3-choosable.  Erd\H{o}s' problem is also naturally extended to the list version: Determine the smallest integer $\ell$ such that graphs in $\mathcal{P}_{4,5,\ldots, \ell}$  are 3-choosable. It was proved by Voigt \cite{Voigt2007}  that $\ell \ge 6$, and proved by Dv\v{o}r\'{a}k and Postle \cite{dvovrak2018correspondence} that $\ell \le 8$. Moreover, it is known   that  graphs in $\mathcal{P}_{4,7,9},\mathcal{P}_{3,6,7,8},\mathcal{P}_{4,5,6,9},\mathcal{P}_{4,6,8,9}$  are 3-DP-colorable, and hence $3$-choosable  (see \cite{KangJinZhu} for detailed references).  

Multiple coloring and multiple list coloring of graphs allow us to explore the colorability of graphs in a finer scale.  As Steinberg's conjecture is disproved, a natural problem is explore multiple coloring of graphs in $\mathcal{P}_{4,5}$.   

    \begin{definition}
      Given a graph $G$ and a function $g \in \mathbb{N}^G$, a \textit{$g$-fold coloring} of $G$ is a mapping $\phi$ which assigns to each vertex $v$ of $G$ a set $\phi(v)$ of $g(v)$ colors, so that adjacent vertices receive disjoint color sets. For a mapping $f: V(G) \to \mathbb{N}$, an $f$-list assignment of $G$ is a mapping $L$ that assigns to each vertex $v$ a set $L(v)$ of $f(v)$ colors. An $(L,g)$-coloring of $G$ is a $g$-fold coloring $\phi$ of $G$ such that $\phi(v) \subseteq L(v)$ for each vertex $v$. We say $G$ is $(f, g)$-choosable if $G$ has an $(L,g)$-coloring for each $f$-list assignment $L$. For positive integers $a,b$, we say $G$ is $(a,b)$-choosable if $G$ is $(f,g)$-choosable for the constant functions $f(v)=a$ and $g(v)=b$ for all $v$. 
    \end{definition}

  If $L(v)=[a]=\{1,2,\ldots, a\}$ for each vertex $v$, then an $(L,b)$-coloring of a graph $G$ is called an $(a,b)$-coloring  of $G$. The \textit{  fractional chromatic number} of $G$ is defined as $$\chi_f(G) = \inf \{ \frac ab: G \text{ is $(a,b)$-colorable}\}.$$ 
 The \textit{fractional choice number} of $G$ is defined as 
 $$ch_f(G) = \inf\{ \frac ab: G \text{ is $(a,b)$-choosable}\}.$$ 
 It was proved by Alon,Tuza and Voigt \cite{AlonTuzaVoigt} that if $G$ is $(a,b)$-colorable, then for some positive integer $m$, $G$ is $(am,bm)$-choosable. Hence $ch_f(G)=\chi_f(G)$ for all $G$. So $ch_f(G)$ is not a new parameter. The concept of \textit{strong fractional choice number} $ch_f^*(G)$ of a graph $G$, introduced by Zhu in 2017 \cite{ZhuStrongfractionalchoicenumber}, is defined as follows:
 $$ch_f^*(G) = \inf\{r: G \text{ is $(a,b)$-choosable for all integers $a, b$ with $a/b \ge r$}\}.  $$

For a family $\mathcal{G}$ of graphs, let 
$$\chi_f( \mathcal{G}) = \sup \{ \chi_f(G): G \in \mathcal{G}\}  \text{ and } ch_f^*(\mathcal{G}) = \sup \{ ch_f^*(G): G \in \mathcal{G}\}.$$

 Multiple coloring and multiple list coloring of graphs in $\mathcal{P}_{4,5}$ are studied in a few papers. It was proved by Dv\v{o}r\'{a}k and Hu \cite{DvorakHu11/3} that   every graph $G \in \mathcal{P}_{4,5}$ is $(11,3)$-colorable. This result was improved by Wang \cite{WangYingQian} who proved that every graph $G \in \mathcal{P}_{4,5}$ is $(7,2)$-colorable. The current known upper bound for the  strong fractional choice number is $4$, and for the lower bound, it was proved by Xu and Zhu \cite{XuZhuStrongfractionalchoicenumber} that for any positive integer $m$, there is a  graph $G \in \mathcal{P}_{4,5}$  which is not  $(3m+ \lfloor \frac {m-1}{12} \rfloor, m)$-choosable. This implies that $ch_f^*(\mathcal{P}_{4,5}) \ge 3 + \frac 1{12}$. In this paper, we prove that for any positive integer $m$, every planar graph $G \in \mathcal{P}_{4,5}$ is $(7m,2m)$-choosable, and hence $ch_f^*(\mathcal{P}_{4,5}) \le 3 + \frac 12$.

Our result is proved in the setting of multiple DP-coloring of graphs.  
The concept of DP-coloring (also known as correspondence coloring) is a generalization of list coloring introduced by Dvo{\v{r}}{\'a}k and Postle in \cite{dvovrak2018correspondence}. 

\begin{definition}
    Let $G$ be a graph. A \textit{ cover} of $G$ is a pair $(L,M)$, where $L=\left \{ L(v): v\in V(G) \right \} $ is a family of pairwise disjoint sets, and $M=\left \{ M_e: e\in E(G) \right \} $ is a family of matchings such that for each edge $e=uv$, $M_e$ is a   matching between sets $L(u)$ and $L(v)$. For a function $f:V(G)\to \mathbb{N}$, we say $(L,M)$ is a $f$-cover of $G$ if $\left | L(v) \right | = f(v)$ for each vertex $v\in V(G)$.   
\end{definition}

\begin{definition}
    Given a cover $\mathcal{H}=(L,M)$ of a graph $G$, an $\mathcal{H}$-coloring of $G$ is a mapping $\phi: V(G) \to \cup_{v\in V(G)}L(v)$ such that for each vertex $v\in V(G), \phi(v) \in L(v)$, for each edge $e=uv\in E(G), \phi(u) \phi(v)\ne E(M_e)$. We say $G$ is \textit{$\mathcal{H}$-colorable} if it has an $\mathcal{H}$-coloring. For a function $f: V(G) \to \mathbb{N} $, we say $G$ is \textit{$f$-DP-colorable} if $G$ is  $\mathcal{H}$-colorable for every $f$-cover $\mathcal{H}$ of $G$.
    The DP-chromatic number of $G$ is defined as 
    $$\chi_{DP}(G) = \min\{k: G \text{ is $k$-DP-colorable}\}.$$
\end{definition}

The concept of $f$-DP-colorable is a generalization of $f$-choosable. Given an $f$-list assignment $F$ of $G$, let $\mathcal{H}=(L,M)$ be the $f$-cover of $G$ defined as follows: For each vertex $v$, $L(v) = \{(v,c): c \in F(v)\}$, and for each edge $e=uv$ of $G$, $M_e= \{ (u,c)(v,c): c \in F(u) \cap F(v)\}$. It is easy to see that an $\mathcal{H}$-coloring of $G$ is equivalent to an $F$-coloring of $G$. Therefore $ch(G) \le \chi_{DP}(G)$ for any graph $G$. It is known \cite{BernshteinYuKostochkaPron} that the difference $\chi_{DP}(G)-ch(G)$ can be arbitrarily large.

Multiple DP-coloring of graphs was first studied in \cite{bernshteyn2020fractional}. 

\begin{definition}
    Assume $\mathcal{H}=(L,M)$ is a cover of $G$ and $g\in \mathbb{N}^G$. An $(\mathcal{H},g)$-coloring of $G$ is a mapping $\phi$ that assigns to each vertex $v  $ a subset  $\phi(v)$ of $ L(v)$ of size $\left | \phi(v) \right |=g(v) $ and for each edge $e=uv$, for each $c \in \phi(u), c' \in \phi(v)$, $cc' \notin M_e$. We say $G$ is $(\mathcal{H},g)$-colorable if there exists an $(\mathcal{H},g)$-coloring of $G$. We say  $G$ is $(f,g)$-DP-colorable if for any $f$-cover $\mathcal{H}$ of $G$, $G$ is $(\mathcal{H},g)$-colorable. If $f,g\in \mathbb{N}^G$ are constant maps with $g(v)=b$ and $f(v)=a$ for all $v\in V(G)$, then  $(f,g)$-DP-colorable is called $(a,b)$-DP-colorable. The \textit{fractional DP-chromatic number} of $G$ is defined as
    $$\chi^*_{DP}(G) = \inf\{ \frac ab: G \text{ is $(a,b)$-DP-colorable}\}.$$
    The \textit{strong fractional DP-chromatic number} of $G$ is defined as
    $$\chi^{**}_{DP}(G) = \inf\{ r: G \text{ is $(a,b)$-DP-colorable for any $a,b$ with $\frac ab \ge r$}\}.$$ 
    For a family $\mathcal{G}$ of graphs,  $$\chi^{*}_{DP}(\mathcal{G}) = \sup\{\chi^{*}_{DP}(G):G \in \mathcal{G}\} \text{ 
 and } \chi^{**}_{DP}(\mathcal{G}) = \sup\{\chi^{**}_{DP}(G):G \in \mathcal{G}\}.   $$
\end{definition}

Similarly,   if a graph $G$ is  $(f,g)$-DP-colorable, then $G$ is $(f,g)$-choosable. Hence $ch_f^*(G) \le \chi_{DP}^{**}(G)$ for any graph $G$.  This paper proves the following theorem.

\begin{theorem}
    \label{Th:1}
    For every graph $G \in \mathcal{P}_{4,5}$, for any positive integer $m$, $G$ is $(7m,2m)$-DP-colorable. Consequently, $$\chi^{**}_{DP}(\mathcal{P}_{4,5}) \le 3 + \frac 12.$$
\end{theorem}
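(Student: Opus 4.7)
The plan is to argue by contradiction via a minimum counterexample, adapting the discharging and reducibility framework of Wang's $(7,2)$-colorability proof to the richer setting of multiple DP-coloring. Let $G \in \mathcal{P}_{4,5}$ be a connected graph with the fewest vertices that admits a $7m$-cover $\mathcal{H}=(L,M)$ with no $(\mathcal{H},2m)$-coloring, fix a plane embedding, and recall that every face of $G$ has length $3$ or at least $6$. The elementary warm-up reduction is $\delta(G)\ge 3$: if $\deg(v)\le 2$, extend an $(\mathcal{H}|_{G-v},2m)$-coloring (given by minimality) to $v$, since each neighbor forbids at most $2m$ of the $7m$ colors in $L(v)$ via its matching, leaving at least $3m \ge 2m$ free colors.

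The heart of the proof is then a catalog of more intricate reducible configurations, since a single degree-$3$ vertex leaves only $7m-6m = m$ free colors. In the DP-coloring setting each reduction must be carried out by partially coloring $G-S$ for a small set $S$ and then extending across the matchings $M_e$ via Hall-type or alternating-path arguments on the bipartite graphs that $M_e$ induces between the selected $2m$-subsets; the key quantitative fact is that each matching $M_{uv}$ sends any $2m$-subset of $L(u)$ injectively to a $2m$-subset of $L(v)$, so the set of colors of $L(v)$ forbidden by a fully colored neighbor $u$ has size exactly $2m$, independent of the arbitrary global structure of the cover. With a suitable list of reducible configurations, targeting vertices of degree $3$, $4$, $5$ lying on $3$-faces together with their incident $6^+$-faces, a discharging argument based on initial charge $\mu(x)=d(x)-4$ for vertices and $\mu(f)=\ell(f)-4$ for faces (which totals $-8$ by Euler's formula) will redistribute charge from $6^+$-faces and high-degree vertices to $3$-faces and low-degree vertices so that every element ends nonnegative, producing the desired contradiction. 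The bound $\chi^{**}_{DP}(\mathcal{P}_{4,5})\le 7/2$ then follows by combining $(7m,2m)$-DP-colorability for all $m$ with the standard monotonicity of DP-colorability in the list size and multiplicity.

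I expect the main obstacle to be translating Wang's reducibility arguments out of the ordinary list-coloring setting, where one may tacitly identify a ``common color'' between two neighboring lists, into the DP setting in which the only connection between $L(u)$ and $L(v)$ is the arbitrary matching $M_{uv}$. Configurations that Wang handles by choosing a color shared across adjacent lists must be replaced by matching-swap or augmenting arguments that simultaneously respect the $2m$ multiplicity at each endpoint, and calibrating the final list of reducible configurations so that each is verifiably DP-reducible while still being strong enough to drive the discharging to nonnegative final charge will likely be the most delicate technical step.
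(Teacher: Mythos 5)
Your high-level skeleton (minimum counterexample, reducibility, discharging) matches the paper, and the degree-$\le 2$ warm-up is fine, but as stated the proposal is missing the two load-bearing structural ideas, and the discharging as set up would not close. First, the paper does not take a minimum counterexample to Theorem~\ref{Th:1} directly. It proves the stronger Theorem~\ref{Th:2}: if the outer boundary $D$ is a cycle of length at most $7$, then any $(\mathcal H,2m)$-coloring of $G[D]$ extends to $G$. Precoloring the boundary is indispensable both for the minimality argument (after deleting/modifying an internal configuration the precoloring on $D$ is unchanged, so the smaller graph is still an instance of the same statement) and for the discharging, where the outer face is given the boosted charge $\operatorname{ch}(f_0)=d(f_0)+6$ and acts as the reservoir that pays external vertices. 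Your flat normalization $\mu(x)=d(x)-4$ has total $-8$ with no designated sink; the negative deficit has to live somewhere, and without the boundary-extension formulation the boundary vertices and outer face cannot absorb it, so the discharging cannot be made to terminate with everything nonnegative.

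Second, you frame DP as strictly harder than list coloring and expect to replace Wang's arguments by ``matching-swap or augmenting'' arguments, but you have overlooked the feature that is the entire reason the paper works in DP: vertex identification. For a reducible set $Z$, a vertex $z\in Z$, and two nonadjacent neighbors $x,y\notin Z$, the paper permutes colors to make $zx,zy$ straight, identifies $x$ and $y$ in $G-Z$, colors the identified graph by minimality (Lemma~\ref{lem-p45} guarantees it stays in $\mathcal P_{4,5}$ with $D$ unchanged), and then notes that since $x,y$ receive the same color set, $z$'s residual list loses only $2m$ colors rather than $4m$. This $+2m$ gain is precisely what pushes the residual lists to $5m$ or $7m$ where needed, and without it the reducibility lemmas (Lemmas~\ref{lem:11}, \ref{lem:12}, \ref{lem:15}) simply fail. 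Finally, the concrete middle layer is entirely absent: you must identify the specific small trees $G[Z]$ that need to be $(f,2m)$-DP-colorable (the $3$-star with $f=(5m;3m,3m,3m)$, the $5$-star, the double claw, the broom), prove those as lemmas, and only then calibrate discharging rules against the resulting list of reducible configurations. The proposal names none of these, so the argument has not yet been constructed.
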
 
 
The advantage of working in the setting of DP-coloring, instead of list coloring, is that when proving the theorem by induction, we can identify non-adjacent vertices in some cases. This is the motivation for the introduction of DP-coloring in \cite{dvovrak2018correspondence}, and play critical role in the proofs in this paper.  

The following terminology will be used in this paper. 
A plane graph $G$ is denoted as $G=(V,E,F)$, where $V,E,F$ are  the vertex set, the edge set and the face set of $G$, respectively. Denote by $d_G(v)$  the degree of vertex $v$ in $G$. A $k$-vertex ($k^{+}$-vertex, $k^{- }$-vertex, respectively) is a vertex of degree $k$ (at least $k$, at most $k$, respectively). The degree of a face is the length of the boundary walk of the face. A $k$-face, $k^{- }$-face or a $k^{+ }$-face is a face of degree $k$ (at most $k$ or at least $k$, respectively). If $v_1,v_2,\ldots,v_k$ are the vertices of a face $f$ in a cyclic order, then we write $f=[v_1v_2 \ldots v_k]$.

 A  vertex or an edge is called \textit{triangular} if it is incident with a triangle, and  a vertex $v$ is called \textit{$k$-triangular} if it is incident with exactly $k$ triangles. 
 % triangles are \textit{intersecting} if they have at least one vertex in common.
 %, and are \textit{adjacent} if they have at least one edge in common.  
 If $uv$ is an edge and $u$ an isolated vertex in $G[N_G(v)]$, then we say $u$ is an \textit{isolated neighbor} of $v$.  Otherwise $u$ is an non-isolated neighbor of $v$. Note that $u$ is an non-isolated neighbor of $v$ if and only if $uv$ is a triangular edge.

 Let $f_0$ denote unique unbounded face of $G$. Call a vertex $v$ in $G$ \textit{external} if $v\in b(f_0)$; \textit{internal} otherwise. For a cycle $C$ of $G$, the length of $C$ is the number of edges of $C$ and is denoted by $|C|$. Call $C$ a \textit{$k$-cycle} if $|C|=k$. Denote by ${\rm int}(C)$ (resp. ${\rm ext}(C)$) the set of   vertices lie strictly inside (resp, lie strictly outside) of $C$. Let ${\rm int}[C]=G-{\rm ext}(C)$ and ${\rm ext}[C]=G-{\rm int}(C)$. Call $C$ a \textit{separating} cycle of $G$ if ${\rm int}(C)\ne \emptyset  \ne {\rm ext}(C)$.  A path with  two ends  external and other vertices internal is called a \textit{splitting} path of $G$. 

 Call an internal vertex $v$ of $G$ \textit{poor}, if $v$ is either a triangular 3-vertex and has  an internal isolated 3-neighbor, or a triangular 4-vertex and has two internal isolated 3-neighbors.

Given  a cover $\mathcal{H}=(L,M)$ of $G$, we 
  view $M$ as a graph with vertex set $\cup_{v \in V(G)}L(v)$, and with edge set 
 $\cup_{e \in E(G)}M_e$. Then for $g \in \mathbb{N}^G$, for an $(\mathcal{H}, g)$-coloring $\phi$ of $G$, $\cup_{x \in V}\phi(x)$  is an independent set   of $M$ with $|\cup_{x \in V}\phi(x) \cap L(v)| = |\phi(v)| = g(v)$ for each vertex $v$.  The set $\phi(v)$, which is set of vertices of $M$, is called the set of colors assigned to $v$.
 
 For a color $c \in L(v)$, $N_M(c)$ denotes the set of neighbors of $c$ in the graph $M$. I.e.,  
$$N_M(c) = \{c': \exists u \in N_G(v), \text{ such that }  c' \in L(u) \text{ and }  cc' \in M_{uv}\}.$$
Thus if $\phi$ is an $(\mathcal{H}, g)$-coloring   of $G$ and $c \in \phi(v)$, then no colors in $N_M(c)$ is assigned to any of the neighbors of $v$. 

%extendible,b(f_0)

\section{Extending a coloring of the boundary cycle to the whole graph}

For the purpose of using induction, instead of proving Theorem \ref{Th:1} directly, we shall prove a stronger and more technical statement.  A cycle $C$ of a plane graph $G$ is called a \textit{good cycle} if $|C| \le 7$. 

\begin{theorem}
\label{Th:2}
    If $G \in \mathcal{P}_{4,5}$,  $D=b(f_0)$ is a good cycle, then for any $7m$-cover $\mathcal{H}=(L,M)$ of $G$, any $(\mathcal{H},2m)$-coloring of $G[V(D)]$ can be extended to an $(\mathcal{H},2m)$-coloring of the whole graph $G$. 
\end{theorem}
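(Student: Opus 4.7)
The plan is to prove Theorem~\ref{Th:2} by contradiction, choosing a minimum counterexample $G$ (minimizing $|V(G)|+|E(G)|$) with outer boundary $D=b(f_0)$ a good cycle, together with a $7m$-cover $\mathcal{H}=(L,M)$ and an $(\mathcal{H},2m)$-precoloring $\phi_0$ of $G[V(D)]$ that does not extend. The argument splits into a structural phase establishing reducible configurations and a discharging phase producing the contradiction.

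For reducibility, I would first show that $D$ is an induced cycle: a chord of $D$ would split $G$ into two plane subgraphs each with a good cycle boundary formed by a subpath of $D$ together with the chord, to which induction applies, and the two extensions glue along $V(D)$. Similarly, no separating cycle $C$ with $|C|\le 7$ exists, since ${\rm ext}[C]$ is strictly smaller and still has $D$ as its outer boundary, so induction extends $\phi_0$ to $V(C)$, after which induction on ${\rm int}[C]$ finishes. I would then rule out internal vertices of degree $\le 2$, poor vertices in the sense of the definition preceding the theorem statement, and various other local configurations around $3$-faces involving adjacent triangular $3$-vertices and $4$-vertices. The key counting observation is that an internal vertex $v$ all of whose neighbors are already colored has at least $7m-2m\cdot d_G(v)$ available colors (since each matching $M_{uv}$ blocks at most $2m$ colors at $v$), which gives slack $\ge 2m$ only when $d_G(v)\le 2$; the deeper reductions come from the DP-coloring flexibility of identifying two non-adjacent vertices, which allows me, when an internal $3$-vertex $v$ has two non-adjacent neighbors $u_1,u_2$ with no other common neighbor, to contract the pair $(u_1,u_2)$ into a single vertex while redefining the cover along the matchings $M_{u_1v},M_{u_2v}$, apply induction to the smaller plane graph, and then recover a coloring of $G$.

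With these configurations reducible, the discharging phase assigns initial charge $\mu(v)=d(v)-4$ to every vertex and $\mu(f)=d(f)-4$ to every face, so that by Euler's formula the total is $-8$. Since $G\in\mathcal{P}_{4,5}$, every internal face is a $3$-face or a $6^+$-face, and triangles are well-separated: no two share an edge (no $4$-cycle) and any two at a common vertex respect the no-$5$-cycle constraint. Positive charge from $6^+$-faces, from $5^+$-vertices, and from the outer face $f_0$ is redistributed to $3$-faces and to low-degree triangular vertices by rules chosen so that the forbidden configurations above exactly match the cases that would otherwise prevent an element from reaching non-negative final charge. After checking that every internal element ends with final charge $\ge 0$ while the outer face together with the boundary vertices cannot absorb the deficit $-8$, the contradiction follows.

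The main obstacle, and the reason the proof is cast in DP-coloring rather than list coloring, will be handling internal $3$-vertices: naive greedy extension fails at degree $3$, so the identification-of-non-adjacent-neighbors trick must be repeatedly invoked. Making this trick work requires verifying, for each candidate $3$-vertex $v$ with neighbors $u_1,u_2,u_3$, that some pair $u_i,u_j$ can be safely identified, meaning they are non-adjacent, share no common neighbor other than $v$, and the identification introduces no $4$- or $5$-cycle in the reduced plane graph. Arranging these conditions, and doing so also when $v$ is adjacent to the precolored boundary $D$ where the matchings are already constrained by $\phi_0$, will drive the length and intricacy of the reducibility analysis and is where I expect most of the technical work to concentrate.
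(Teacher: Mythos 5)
Your high-level blueprint---minimal counterexample, reducibility exploiting the DP-identification trick, then discharging---matches the paper's, but there are two substantial gaps that would prevent the proof from going through as sketched.

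First, the heart of the paper's reducibility argument is not the single-vertex slack estimate $|L'(v)|\ge(7-2d_{G-Z}(v))m$, but a collection of technical lemmas showing that certain small \emph{trees} $G[Z]$ (a claw, a double claw, a $5$-leaf star, a broom) are $(f,2m)$-DP-colorable for specific non-uniform list-size functions $f$. These lemmas occupy an entire section of the paper and carry essentially all the combinatorial weight: for instance, to show a $4$-vertex cannot have three pairwise non-adjacent internal $3$-neighbors (Lemma~\ref{lem:10}) one must color a claw $G[Z]$ where the center gets $5m$ colors and each leaf gets $3m$, and this is not a greedy argument. Your proposal never mentions the need to color multi-vertex sets $Z$ and prove list-coloring facts about the induced trees; without that machinery the configurations you name cannot actually be discharged against. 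Relatedly, you propose to ``rule out \dots poor vertices,'' but poor vertices are \emph{not} reducible in the paper: they persist into the discharging phase, and the definition exists precisely to drive rules R2(1) and R3. What is reducible is poor vertices occurring next to low-degree vertices on a triangle (Lemmas~\ref{lem:12}--\ref{lem:15}).

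Second, the discharging set-up is off. You assign $\mu(v)=d(v)-4$, $\mu(f)=d(f)-4$ uniformly, for total $-8$, and then want internal elements nonnegative while ``the outer face together with the boundary vertices cannot absorb the deficit.'' But boundary $2$-vertices start at $-2$ and boundary $3$-vertices at $-1$, while your outer face starts at $d(f_0)-4\le 3$; with several $2$-vertices on $D$ the boundary contribution can be far below $-8$, and your rules give the outer face nothing to give away. The paper fixes exactly this by using $\operatorname{ch}(v)=2d(v)-6$, $\operatorname{ch}(f)=d(f)-6$ for internal faces, and crucially $\operatorname{ch}(f_0)=d(f_0)+6$, so the outer face has a reserve of $9$ to $13$ units to push to boundary vertices (R4--R6), and the total is exactly $0$. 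Then showing every element nonnegative and $\operatorname{ch}'(f_0)>0$ is an immediate contradiction. Your charge function either needs to be replaced or the outer face needs a large bonus; as written the boundary analysis would break.

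A smaller point: your identification trick is applied to a $3$-vertex $v$, contracting two of its three neighbors. The paper's Lemma~\ref{lem-p45} is stated and used for a vertex $z\in Z$ of degree at least $4$, and the proof that the identification stays in $\mathcal{P}_{4,5}$ relies on having two further neighbors of $z$ between the identified pair so that any new short cycle becomes a separating good cycle in $G$. For a $3$-vertex there is only one remaining neighbor, and that argument does not obviously transfer; in any case the paper never needs it for $3$-vertices, since isolated $3$-vertices are handled purely by discharging.
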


It is easy to see that Theorem \ref{Th:1} follows from Theorem \ref{Th:2}: Assume $G\in \mathcal{P}_{4,5}$. If $G$ contains no triangle, then $G$ has girth at least $6$, and hence is 2-degenerate. Then $G$ is $(6m,2m)$-DP-colorable for any integer $m$ (and hence $(7m,2m)$-DP-colorable for any integer $m$).

Assume $G$ has a triangle $C$ and $\mathcal{H}=(L,M)$ is a $7m$-cover of $G$. Let $\phi$ be an $(\mathcal{H},2m)$-coloring of $C$. By Theorem \ref{Th:2}, $\phi$ can be extended to an $(\mathcal{H},2m)$-coloring $\phi_1$ of ${\rm int}[C]$, as well as an $(\mathcal{H},2m)$-coloring $\phi_2$ of ${\rm ext}[C]$.
The union $\phi_1 \cup \phi_2$ is an $(\mathcal{H},2m)$-coloring of $G$.

\bigskip 
 The remaining part of the paper is devoted to the proof of Theorem \ref{Th:2}.

For convenience, an extension of $\phi$ to an  $(\mathcal{H}, 2m)$-coloring of $G$ or a subgraph $G-Z$ of $G$ is also denoted by $\phi$, and denote the restriction of $\phi$ to $D$ (which is the initial coloring $\phi$ of $D$) by $\phi|_D$. Thus we call $\phi$   \textit{an extension of $\phi|_D$ to $G$ (or $G-Z$)}.  

 In the proof of Theorem \ref{Th:2}, we often need to first find an $(\mathcal{H},2m)$-coloring $\phi$ of a subgraph $G-Z$ of $G$. Then extend $\phi$ to the whole graph, by coloring vertices in $G[Z]$. 
 
 For a vertex $v \in Z$, the set of available colors is  $L^{\phi}(v) = L(v) - N_M(\phi(G-Z))$ (those colors in $L(v)$ adjacent to a color in $M$ assigned to a neighbor of $v$ cannot be used on $v$ anymore). Let $\mathcal{H}^{\phi} = (L^{\phi}, M^{\phi})$, where $M^{\phi}$ is the restriction of $M$ to $\cup_{v \in X} L^{\phi}(v)$,  we need to find an $(\mathcal{H}^{\phi}, 2m)$-coloring of $G[Z]$. For this purpose, we need to show that 
 $G[Z]$ is $(f,2m)$-DP-colorable, where $f(v) = |L^{\phi}(v)|$ for $v \in Z$. 

 In Section \ref{sec-f2m}, we shall show that for some small graphs $H$, for some $f \in \mathbb{N}^H$, $H$ is $(f,2m)$-DP-colorable.

 In Section \ref{sec-preparation}, we give a sketch of the proof of Theorem \ref{Th:2}, and prove a preliminary lemma needed in later proofs.

In Section \ref{sec-property}, by using the results in Section \ref{sec-f2m},  we shall prove that certain configuartions are reducible, and hence cannot be contained in a minimal counterexample.     

In Section \ref{sec-discharging}, by using discharging method, we show that any graph $G \in \mathcal{P}_{4,5}$ contains some reducible configuration, and hence cannot be a minimal counterexample to Theorem \ref{Th:2}. 

\section{Multiple DP-coloring of some small graphs}
\label{sec-f2m}

In this section, for a family of small trees $T$, we prove that $T$ is $(f, g)$-DP-colorable, where $f,g \in \mathbb{N}^T$. These results will be used in Section \ref{sec-property} to prove some configurations are reducible.
First we show that for trees, $(f,g)$-DP-colorable is equivalent to $(f,g)$-choosable.

\begin{lemma}
    \label{lem-choosable}
    Assume $T$ is a tree and $f,g \in \mathbb{N}^T$. Then $T$ is $(f,g)$-DP-colorable if and only if $T$ is $(f,g)$-choosable. Moreover, to prove that $T$ is $(f,g)$-choosable, we may restrict to those $f$-list assignment $L$ such that for each edge $uv$ of $T$, either $L(u) \subseteq L(v)$ or $L(v) \subseteq L(u)$.
\end{lemma}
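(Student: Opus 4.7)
The plan is to establish both claims via a rooted relabeling argument that exploits the unique-parent structure of a tree. The forward direction (DP-colorable implies choosable) is immediate from the canonical DP-cover associated to any list assignment, as discussed immediately before the lemma.

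For the converse, fix an arbitrary $f$-cover $\mathcal{H}=(L,M)$ of $T$ and root $T$ at some vertex $r$. Processing vertices in BFS order, I would construct bijections $\tau_v : L(v) \to L'(v)$ into a fresh universe of labels so that each matching $M_{uv}$ becomes the diagonal matching $\{(\ell,\ell) : \ell \in L'(u) \cap L'(v)\}$ after relabeling. Start by mapping $L(r)$ to any $f(r)$ fresh labels. At a non-root $v$ with parent $u$, set $\tau_v(c) = \tau_u(c')$ for every $c \in L(v)$ matched in $M_{uv}$ to some $c' \in L(u)$, and map every unmatched $c \in L(v)$ to a label that has not yet been used anywhere in the tree. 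Because $T$ is a tree, only the single edge $uv$ enters the definition of $\tau_v$, so no inconsistency can arise; freshness of unused labels guarantees that $L'(u) \cap L'(v)$ is exactly the set of matched labels, so the relabeled matching really is diagonal. Consequently $(\mathcal{H}, g)$-colorings of $T$ are in bijective correspondence with $(L', g)$-colorings of $T$, and the hypothesis of $(f,g)$-choosability produces the former.

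For the ``moreover'' clause, I would adapt the same idea to turn an arbitrary $f$-list assignment $L$ into a nested one $L'$. Root $T$ at $r$ and process in BFS order, building bijections $\tau_v : L(v) \to L'(v)$ that satisfy $\tau_u(c) = \tau_v(c)$ for every $c \in L(u) \cap L(v)$ along each edge. At a non-root $v$ with parent $u$, the common colors $L(u) \cap L(v)$ are already pinned by $\tau_u$; the remaining images of $L(v) \setminus L(u)$ are free to choose. If $f(v) \le f(u)$, draw them from $L'(u) \setminus \tau_u(L(u) \cap L(v))$, which contains exactly $f(u) - |L(u) \cap L(v)|$ unused labels, and the resulting $L'(v)$ lies inside $L'(u)$. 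If $f(v) > f(u)$, bijectively cover $\tau_u(L(u) \setminus L(v))$ first so as to force $L'(u) \subseteq L'(v)$, and use completely fresh labels for the remainder. An $(L',g)$-coloring $\phi'$ then pulls back to an $(L,g)$-coloring $\phi(v) := \tau_v^{-1}(\phi'(v))$: if $\phi(u) = \phi(v) = c$ on some edge $uv$, then $c \in L(u) \cap L(v)$, so $\phi'(u) = \tau_u(c) = \tau_v(c) = \phi'(v)$, contradicting $\phi'$ being an $L'$-coloring.

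The main subtlety will be verifying that each step preserves the list sizes and introduces no spurious coincidences of labels, either across edges or within a single list; both points are handled by drawing fresh labels from a sufficiently large universe and by exploiting the fact that in a tree every vertex's relabeling is controlled by exactly one parent edge.
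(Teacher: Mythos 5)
Your proof is correct, and it is close in spirit to the paper's proof, but it takes a somewhat different route. The paper proves the converse direction and the ``moreover'' clause in a single step: it first enlarges each matching $M_{uv}$ so that it saturates the smaller of $L(u)$ and $L(v)$, then observes that the connected components of the cover graph $M$ are isomorphic to subtrees of $T$ (each component meets every $L(v)$ at most once, since the $M_e$ are matchings and $T$ has no cycles), and defines $L'(v)$ as the set of components meeting $L(v)$. The saturation step forces $L'(u) \subseteq L'(v)$ whenever $|L(u)| \le |L(v)|$, so the resulting list assignment is automatically nested, and the same construction yields both the equivalence and the ``moreover'' clause. Your proposal instead performs a sequential BFS relabeling from a chosen root, constructing bijections $\tau_v$ edge by edge, and then gives a second, separate relabeling argument for the nestedness claim. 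Both arguments are sound; yours is more hands-on and local (and therefore slightly longer), while the paper's is more compact because the saturation trick lets the connected-component labeling do all the work at once. You could collapse your two arguments into one by first saturating each matching to the smaller side before relabeling, which would recover the paper's proof almost exactly.
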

\begin{proof}
    We have observed before that for any graph $G$, $(f,g)$-DP-colorable  implies $(f,g)$-choosable. Assume $T$ is a tree and $f,g \in \mathbb{N}^T$. Assume $T$ is $(f,g)$-choosable. We shall show that $T$ is $(f,g)$-DP-colorable.
 
    Assume $\mathcal{H}=(L,M)$ is an $f$-cover of $T$. For each edge $e=uv$, if $|L(u)| \le |L(v)|$, by adding edges to $M_e$ if needed, we may assume that the matching $M_e$ saturates $L(u)$. Consider the graph $M$ with edge set $\cup_{e \in E(T)}M_e$. Each connected component $B$ of $M$ is isomorphic to a subtree of $T$. Assume the connected components of $M$ are $B_1,B_2, \ldots, B_q$. Let $L'$ be the list assignment of $T$ defined as $L'(v) = \{i: V(B_i) \cap L(v) \ne \emptyset\}$. It is obvious that $|L'(v)|=|L(v)| = f(v)$. Since $T$ is $(f,g)$-choosable, there is a $g$-fold $L'$-coloring $\phi'$ of $T$. Let $\phi(v) = \cup \{B_i \cap L(v): i \in \phi'(v)\}$. It is easy to check that $\phi$ is an $(\mathcal{H}, g)$-coloring of $T$. 
\end{proof}

In the following, given a tree $T$, functions $f,g \in \mathbb{N}^T$, and an $f$-list assignment $L$ of $T$, we shall construct an $(L,g)$-coloring $\phi$ of $T$. The coloring $\phi$ is constructed by adding colors to $\phi(x)$ for each vertex $x$ in gradually. In the process, $\phi(x)$ always denote the current set of colors assigned to $x$. So $\phi(x)$ changes in the process.  Initially, $\phi(x)= \emptyset$ for all $x \in V(T)$.

\begin{lemma}
    \label{lem-claw}
    Assume $T$ is a claw with vertex set $\{u, v_1,v_2,v_3\}$, and edge set $\{uv_1,uv_2,uv_3\}$, and $f \in \mathbb{N}^T$ is defined as $f(v_i)=3m$ for $i=1,2,3$ and $f(u)=5m$. Then $T$ is $(f, 2m)$-choosable. 
\end{lemma}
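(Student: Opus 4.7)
By Lemma~\ref{lem-choosable} it suffices to prove that $T$ is $(f,2m)$-choosable, and we may further restrict to $f$-list assignments $L$ in which for each edge $uv_i$ one of $L(u), L(v_i)$ contains the other. Since $|L(u)|=5m>3m=|L(v_i)|$, only the inclusion $L(v_i)\subseteq L(u)$ is possible. Write $B:=L(u)$ and $A_i:=L(v_i)$, so $|B|=5m$, $|A_i|=3m$, and $A_1,A_2,A_3\subseteq B$.

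A valid $(L,2m)$-coloring consists of $2m$-sets $\phi(u)\subseteq B$ and $\phi(v_i)\subseteq A_i$ with $\phi(u)\cap\phi(v_i)=\emptyset$. Once $\phi(u)$ is chosen, we may take $\phi(v_i)$ to be any $2m$-subset of $A_i\setminus\phi(u)$, which exists as long as $|\phi(u)\cap A_i|\leq m$. Hence the entire lemma reduces to the following combinatorial claim: \emph{there exists $X\subseteq B$ with $|X|=2m$ and $|X\cap A_i|\leq m$ for $i=1,2,3$.}

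To produce such an $X$, I would use a greedy that prefers colors lying in fewer of the $A_i$'s. Partition $B$ into cells $B_J=\{c\in B:\{i:c\in A_i\}=J\}$ for $J\subseteq\{1,2,3\}$, and define the type of $c\in B_J$ to be $|J|$. Start with $X=\emptyset$; at each step, add a color $c\in B\setminus X$ of minimum type subject to the constraint that $|X\cap A_i|<m$ for every $i$ with $c\in A_i$. If the greedy stalls at some $X$ with $|X|<2m$, let $I=\{i:|X\cap A_i|=m\}$ be the set of saturated indices; then $B\setminus X\subseteq\bigcup_{i\in I}A_i$, so $5m-|X|\leq 3m|I|$, and since $|X|<2m$ we must have $|I|\geq 2$.

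The main obstacle is ruling out this halting condition when $|I|\in\{2,3\}$. My plan uses two structural inputs: the pairwise overlap bound $|A_i\cap A_j|\geq|A_i|+|A_j|-|B|=m$, and the identity $\sum_{i<j}|B_{\{i,j\}}|+2|B_{\{1,2,3\}}|=4m+|B_\emptyset|$ obtained by subtracting $|B|=5m$ from $\sum_i|A_i|=\sum_J|J|\cdot|B_J|=9m$. Together with the greedy's low-type preference, these force that by the time two budgets saturate the greedy has already placed $\geq 2m$ colors in $X$, contradicting $|X|<2m$. The verification reduces to a short case split on $|B_\emptyset|$: the regime $|B_\emptyset|\geq 2m$ is trivial (take $X\subseteq B_\emptyset$), while in the regime $|B_\emptyset|<2m$ the identity guarantees enough pair- and triple-type colors remain to complete $X$ without exceeding any cap $m$. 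Once $X$ is built, set $\phi(u):=X$ and take $\phi(v_i)$ to be any $2m$-subset of $A_i\setminus X$.
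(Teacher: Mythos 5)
Your reduction is correct and in fact equivalent to the lemma: after normalizing $L(v_i)\subseteq L(u)$, the claw has an $(L,2m)$-coloring if and only if there is a $2m$-set $X\subseteq L(u)$ with $|X\cap L(v_i)|\le m$ for each $i$, since that is exactly the condition under which each $L(v_i)\setminus X$ still has $2m$ colors available. The paper's own proof does something noticeably different in spirit: it never commits to $\phi(u)$ up front, but interleaves the choices, starting from an $m$-subset of $L(v_1)\cap L(v_2)$ (which has size $\ge m$ since $|L(v_1)|+|L(v_2)|>|L(u)|$), reserving colors on the leaves to protect them, and building $\phi(u)$ in two $m$-sized chunks. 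Both routes lead to the same combinatorial fact, and yours is a cleaner formulation of what must be proved.

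However, the greedy you describe for producing $X$ does not work as stated, and the structural inputs you cite do not rescue it. Take $m$ divisible by $3$, $|B_\emptyset|=|B_{\{1,2,3\}}|=0$, $|B_{\{i\}}|=m/3$ for each $i$, and $|B_{\{i,j\}}|=4m/3$ for each pair; one checks $|B|=5m$ and $|A_i|=3m$, and here $|A_i\cap A_j|=4m/3+m/3\cdot 0\ge m$ and the identity $\sum|B_{\{i,j\}}|+2|B_{\{1,2,3\}}|=4m+|B_\emptyset|$ both hold. Your greedy (minimum type first, subject to not violating a saturated budget) takes all $m$ singleton-type colors, reaching $|X|=m$ with each budget at $m/3$; if it then draws its next $2m/3$ colors all from $B_{\{1,2\}}$, budgets $1$ and $2$ saturate simultaneously at $|X|=5m/3<2m$, and every remaining color lies in $A_1$ or $A_2$, so the greedy halts with $I=\{1,2\}$. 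A valid $X$ certainly exists here (take all singletons and then $m/3$ from each pair-type cell, giving $|X\cap A_i|=m$ exactly), but reaching it requires balancing across the pair-type cells, which your greedy does not enforce. So the sentence ``these force that by the time two budgets saturate the greedy has already placed $\ge 2m$ colors'' is false as written, and the promised ``short case split'' is exactly where the real work lies. To repair this you would need either a balance-aware selection rule (e.g.\ among pair-type colors, always pick one hitting the two budgets with the most remaining slack, and then argue the two budgets that saturate first cannot both do so before $|X|=2m$), or simply follow the paper's interleaved construction, which sidesteps the issue entirely.
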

\begin{proof}
   Let $L$ be an $f$-list assignment of $T$. We construct an
    $L$-coloring $\phi$ of $T$ by gradually adding colors to $\phi(x)$ for $x \in V(T)$. Initially, 
    $\phi(x)=\emptyset$ for all $x \in V(T)$. 
    We may assume that $ L(v_i)\subseteq  L(u) $   for $i=1,2,3$. As $|L(v_1) \cup L(v_2)| \le |L(u)| =5m$, we have $$ |L(v_1)  \cap  L(v_2)  | \ge m.$$ 
    Let $A $ be an $m$-subset $  L(v_1) \cap L(v_2)$,  add $A$ to $\phi(v_i)$ for $i=1,2$.  
     
   As  $|L(u) -   (L(v_3) \cup A)| \ge m$, there is  an $m$-subset $B$ contained in $L(u) -   (L(v_3) \cup A)$. Add $B$ to $\phi(u)$. 
    
    For $i=1,2$,  $|L(v_i) - (A \cup B)| \ge m$. Choose an $m$-subset $A'_i \subseteq L(v_i) - (A \cup B)$  and add $A'_i$ to $\phi(v_i)$. 
    
    As $|L(u) - (A \cup A'_1 \cup A'_2 \cup B)| \ge m$, we choose  a subset $B'$ of $L(u) - (A \cup A'_1 \cup A'_2 \cup B)$ and add $B'$ to $\phi(u)$.

    Let $\phi(v_3)$ be a $2m$-subset of $L(v_3)-B'$. Then $\phi$ is an $(L, 2m)$-coloring of $T$.
\end{proof}

\begin{figure}[htp]
    \centering
    \includegraphics[width=13cm]{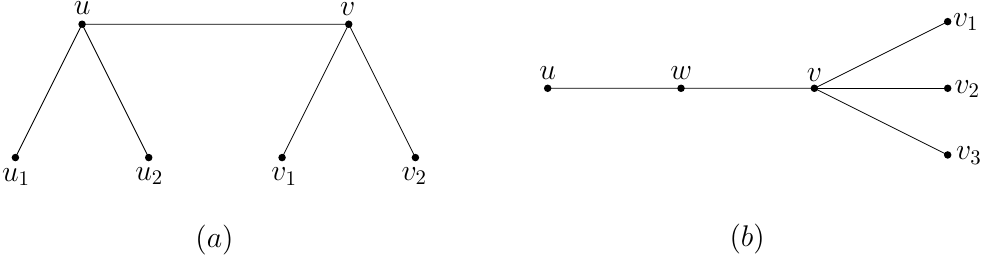}
    \caption{\label{fig:ori1} (a) The tree in Lemma \ref{lem-doubleclaw} and Lemma \ref{lem-doublestar2}; (b) The tree in Lemma \ref{lem-broom}}
\end{figure}

\begin{lemma}
\label{lem-doubleclaw}
    Let $T$ be a double claw with vertex set $\{u,v, u_1,u_2, v_1,v_2\}$ 
    and edge set $\{uv,uu_1,uu_2, vv_1, vv_2\}$. 
    Let $f(u)=f(v)=4m$ and $f(u_i)=f(v_i)=2m$ for $i=1,2$. 
    Let $g(u)=g(v)=2m$ and $g(v_i) = g(u_i) =m$ for $i=1,2$. Then $T$ is $(f,g)$-choosable. 
\end{lemma}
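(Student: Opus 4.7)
The plan is to first reduce to list coloring with nested lists via Lemma~\ref{lem-choosable}, then reformulate the problem as finding a suitable $2m$-subset of $W:=L(u)=L(v)$, and finally establish existence of that subset by a combinatorial construction.

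By Lemma~\ref{lem-choosable}, it suffices to handle the list-coloring case in which $L$ is nested along each edge. Along $uv$ this forces $L(u)=L(v)=:W$, since both lists have size $4m$; along the remaining edges we get $L(u_i),L(v_j)\subseteq W$ for $i,j\in\{1,2\}$.

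Any valid $(L,g)$-coloring $\phi$ must assign to $u$ and $v$ disjoint $2m$-subsets of $W$, and since $|W|=4m$ these must partition $W$. Writing $A=\phi(u)$ and $B=W\setminus A=\phi(v)$, the leaves must then be colored with $\phi(v_i)\subseteq L(v_i)\cap A$ and $\phi(u_j)\subseteq L(u_j)\cap B$, each of size $m$. Thus the task reduces to producing a $2m$-subset $A\subseteq W$ satisfying
\[
|A\cap L(v_i)|\ge m \quad\text{and}\quad |A\cap L(u_j)|\le m \quad\text{for } i,j\in\{1,2\}.
\]
Once such $A$ exists, the coloring of the leaves is immediate.

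The main obstacle is establishing the existence of such $A$. As a guiding observation, the fractional ``half'' assignment $x_w=1/2$ satisfies every one of these constraints with equality, so the underlying LP is always feasible; the work is in rounding to an integer solution. I intend to argue by case analysis on $|L(v_1)\cap L(v_2)|$ and $|L(u_1)\cap L(u_2)|$. When one of these intersections, say $L(v_1)\cap L(v_2)$, has size at least $m$, an $m$-subset of it can be placed into $A$ to ``double-spend'' on both $v$-constraints at once, leaving $m$ free slots to satisfy the $u$-constraints by preferring elements of $W\setminus(L(u_1)\cup L(u_2))$; when $|L(v_1)\cap L(v_2)|<m$, the symmetric difference $L(v_1)\triangle L(v_2)$ has more than $2m$ elements, which gives ample room to meet each lower bound with distinct elements. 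The trickiest subcase is when both unions $L(v_1)\cup L(v_2)$ and $L(u_1)\cup L(u_2)$ exhaust $W$ and the $v$- and $u$-lists are tightly correlated, since then each constraint must be met with equality; here feasibility reduces to a small system over the atoms of the common refinement of the two partitions of $W$ induced by $(L(v_1),L(v_2))$ and $(L(u_1),L(u_2))$, whose integrality will follow from the identities $|W|=4m$ and $|L(\cdot)|=2m$ together with the parity balance between these atoms.
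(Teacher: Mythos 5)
Your reformulation is a genuinely nice step and is not how the paper proceeds: you observe that after invoking Lemma~\ref{lem-choosable} to force $L(u)=L(v)=:W$ and $L(u_i),L(v_j)\subseteq W$, an $(L,g)$-coloring exists if and only if there is a $2m$-subset $A\subseteq W$ with $|A\cap L(v_i)|\ge m$ and $|A\cap L(u_j)|\le m$ for $i,j\in\{1,2\}$ (with $\phi(u)=A$, $\phi(v)=W\setminus A$, and the leaves colored from $L(v_i)\cap A$, $L(u_j)\setminus A$). That equivalence is correct and isolates the combinatorial core of the lemma cleanly. The paper instead builds the color sets $\phi(x)$ greedily vertex by vertex, with a case split on $|L(u_1)\cap L(u_2)|$ versus $m$; it never passes to the single-set reformulation.

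However, the existence argument for $A$ is where your proposal has a real gap. Two specific problems. First, the ``double-spend'' step is not safe as stated: if you pick an arbitrary $m$-subset $A_0\subseteq L(v_1)\cap L(v_2)$, it can happen that $A_0\subseteq L(u_1)\cap L(u_2)$, in which case no further element of $L(u_1)\cup L(u_2)$ may be added, and ``preferring $W\setminus(L(u_1)\cup L(u_2))$'' only works because one must separately verify that $|W\setminus(L(u_1)\cup L(u_2))|=|L(u_1)\cap L(u_2)|\ge m$ in exactly that situation. A correct completion needs either a careful choice of $A_0$ (e.g.\ minimizing its overlap with $L(u_1)\cap L(u_2)$) or a complementation argument showing the remaining $m$ slots can always be filled; you assert the heuristic but do not justify it. Second, and more seriously, you explicitly defer the ``trickiest subcase'' (both unions exhaust $W$, all constraints tight) to an unproved claim that ``integrality will follow from the identities and parity balance'' over the atoms. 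That is the crux, not a routine check: LP feasibility via the half-integral point $x_w=1/2$ does not by itself yield an integral $A$, and the system of atom equations must actually be solved. (It is in fact solvable — writing $a_{ij}=|L(v_i)\cap L(u_j)|$ one finds $a_{11}=a_{22}$, $a_{12}=a_{21}$, $a_{11}+a_{12}=2m$, and then $x_{11}=x_{22}$, $x_{12}=x_{21}$ with $x_{11}+x_{12}=m$ subject to $x_{ij}\le a_{ij}$ has an integer solution — but this is work you need to do, not announce.) Until the case analysis is carried out in full, the proof is incomplete, even though the strategy is sound and, if finished, would give an arguably cleaner argument than the paper's greedy construction.
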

\begin{proof}
Let $L$ be an $f$-list assignment of $T$. Initially, we let $\phi(x)= \emptyset$ for $x \in V(T)$. 
%We shall add sets of colors to $\phi(x)$ by a few steps so that eventually we obtain an $(L,g)$-coloring of $T$. 

By Lemma \ref{lem-choosable}, we may assume that $L(u)=L(v)$ and   $L(v_i), L(u_i) \subseteq L(u) $ for $i=1,2$. 

\noindent
{\bf Case 1}  $L(u_1) \cap L(u_2)| \ge m$.

Let $A$ be an $m$-subset of $L(u_1) \cap L(u_2)$, add $A$ to $\phi(u_1)$ and $\phi(u_2)$. 
 
 Let $L'(u) = L(u) - \phi(u_1)$ and  $L'(x)=L(x)$ for $x \in \{v, v_1, v_2\}$. 
 
 Let $B$ be an $m$-subset of $L'(u) \cap L(v_1)$. Add $B$ to $\phi(u)$ and $\phi(v_1)$. 
 
 Let $L''(v)=L(v)-B, L''(u)=L'(u)-B$ and $L''(v_2)=L(v_2)$. We may assume that $L''(v_2) \subseteq L''(v)$. Then $L''(u) \cap L''(v_2)$ contains an $m$-subset $C$. Add $C$ to $\phi(u)$ and $\phi(v_2)$. 
 
 Add a $2m$-subset of $L''(v)-C$ to $\phi(v)$.  Then $\phi$ is an $(L,g)$-coloring of $T$. 

\noindent
{\bf Case 2}  $L(u_1) \cap L(u_2)| < m$.

 Then $|L(u_1) \cup L(u_2)| > 3m$. Let $A$ be an $m$-subset of $(L(u_1) \cup L(u_2)) - L(v_1) $. We add $A \cap L(u_i)$ to $\phi(u_i)$. Let $B$ an $m$-subset of $L(v_2) - A$, and let $\phi(v_2) = B$. 

For $i=1,2$, let  $A_i$ be a subset of $L(u_i)-B-A$ of size $|A_i|=m-|A \cap L(u_i)|$, and add $A_i$ to $\phi(u_i)$. 

Note that $|A_1 \cup A_2| \le |A_1|+|A_2| = m$. 
Let $\phi(v_1)$ be an $m$-subset  of $L(v_2) - (A_1 \cup A_2)$. 
Now each of $u_1,u_2, v_1, v_2$ has been assigned a set of $m$ colors. Moreover, $(\phi(u_1) \cup \phi(u_2)) \cap (\phi(v_1) \cup \phi(v_2)) = \emptyset $.

Let $\phi(u)$ be a $2m$-subset of $L(u)-(\phi(u_1) \cup \phi(u_2))$ that contains $ \phi(v_2) \cup \phi(v_1)$, and let $\phi(v) =L(v) -\phi(u)$. We obtain an $(L,g)$-coloring of $T$. 
   \end{proof}

    \begin{lemma}
        \label{lem-doublestar2}
         Let $T$ be a double claw with vertex set $\{u,v, u_1,u_2, v_1,v_2\}$ and edge set $\{uv,uu_1,uu_2, vv_1, vv_2\}$. 
    Let $f(u)=f(v)=5m$ and $f(u_i)=f(v_i)=3m$ for $i=1,2$. 
      Then $T$ is $(f,2m)$-choosable, and hence $(f,2m)$-DP-olorable. 
    \end{lemma}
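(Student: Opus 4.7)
The plan mimics Lemma \ref{lem-doubleclaw}, exploiting the non-adjacencies in the double star $T$. By Lemma \ref{lem-choosable} it suffices to prove $(f,2m)$-choosability, and by the second half of that lemma we may further assume $L(u)=L(v)$ (since both have size $5m$) and $L(u_i), L(v_i)\subseteq L(u)$ for $i=1,2$. Write $L=L(u)$, $S_i=L(u_i)$, $T_i=L(v_i)$.

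The first step is a reduction: if we can find disjoint $2m$-subsets $A=\phi(u)$ and $B=\phi(v)$ of $L$ satisfying $|A\cap S_i|\le m$ and $|B\cap T_i|\le m$ for $i=1,2$, then the coloring can be completed by setting $\phi(u_i)$ to be any $2m$-subset of $S_i\setminus A$ (which has size $3m-|A\cap S_i|\ge 2m$) and $\phi(v_i)$ analogously in $T_i\setminus B$. All four edge-disjointness conditions on $T$ are then automatically satisfied. So the problem reduces to producing such $A$ and $B$.

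To construct $A$ and $B$, I plan to decompose each color set into pieces shared with non-adjacent vertices (recalling that $u_1\not\sim u_2$, $v_1\not\sim v_2$, $u\not\sim v_i$, and $v\not\sim u_i$). Specifically, I would pick a common piece $A_0\subseteq S_1\cap S_2$ of size $m$ to be shared by $\phi(u_1)$ and $\phi(u_2)$; an analogous piece $A'_0\subseteq T_1\cap T_2$ shared by $\phi(v_1)$ and $\phi(v_2)$; pieces $B_i\subseteq T_i$ of size $m$ shared between $\phi(u)$ and $\phi(v_i)$; and pieces $D_i\subseteq S_i$ of size $m$ shared between $\phi(v)$ and $\phi(u_i)$. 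Setting $\phi(u)=B_1\cup B_2$, $\phi(v)=D_1\cup D_2$, $\phi(u_i)=A_0\cup D_i$, $\phi(v_i)=A'_0\cup B_i$ meets all the size requirements, and the edge-disjointness conditions reduce to a Hall-type verification that the six $m$-pieces can be placed pairwise disjointly in $L$, using only $|S_1\cap S_2|,|T_1\cap T_2|\ge m$ and $|S_i|=|T_i|=3m$.

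The main obstacle is that a total-size count (the six pieces live in $L$ of size $5m$, yet their sizes sum to $6m$) essentially forces $|A_0\cup A'_0|=m$, i.e.\ $A_0=A'_0\subseteq S_1\cap S_2\cap T_1\cap T_2$. When this four-fold intersection has size $\ge m$ the construction above goes through directly. When it is smaller, I would augment the decomposition with additional ``slack'' pieces $E_u\subseteq \phi(u)$ and $E_v\subseteq \phi(v)$ drawn from $L$ outside the leaf lists (allowed because $\phi(u)$ need not live inside $T_1\cup T_2$); a short case analysis on $|S_1\cap S_2|$ and $|T_1\cap T_2|$ then shows the slack can always be placed compatibly. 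This handling of the tight boundary case is the delicate part of the proof.
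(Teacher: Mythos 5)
Your reduction---find disjoint $2m$-subsets $A=\phi(u)$, $B=\phi(v)$ of $L$ with $|A\cap S_i|\le m$ and $|B\cap T_i|\le m$, then color each leaf inside $S_i\setminus A$ or $T_i\setminus B$---is correct. The gap lies in your construction of $A$ and $B$, and one of your intermediate claims is false. You assert that when $|S_1\cap S_2\cap T_1\cap T_2|\ge m$ the six-piece decomposition ``goes through directly.'' Take $S_1=S_2=T_1=T_2=S$ for a single $3m$-set $S$ (Lemma~\ref{lem-choosable} certainly allows this); the four-fold intersection is all of $S$, of size $3m\ge m$. After choosing $A_0=A_0'\subseteq S$ of size $m$, your sets $B_1,B_2\subseteq T_i=S$ and $D_1,D_2\subseteq S_i=S$ must each have size $m$, be pairwise disjoint, and avoid $A_0$; that is $4m$ colors packed into $S\setminus A_0$, which has only $2m$. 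So the construction already collapses in the case you flag as easy, and the ``slack'' regime---which you defer to ``a short case analysis''---is in fact the whole content of the lemma. As written, the proposal is a plan, not a proof.

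The paper's proof is shorter and sidesteps these difficulties by factoring through Lemma~\ref{lem-doubleclaw}. After the standard normalization it notes that $|L(u_1)\cap L(u_2)|\ge m$ and $|L(v_1)\cap L(v_2)|\ge m$ (two $3m$-lists inside a $5m$-list must overlap in at least $m$ colors), assigns an $m$-set $A\subseteq L(u_1)\cap L(u_2)$ to both $u_1,u_2$ and an $m$-set $B\subseteq L(v_1)\cap L(v_2)$ to both $v_1,v_2$, deletes $A$ from $L(u),L(u_1),L(u_2)$ and $B$ from $L(v),L(v_1),L(v_2)$, and checks that the residual instance has exactly the list sizes and demands of Lemma~\ref{lem-doubleclaw}, which then finishes. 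If you want to complete a direct argument, either prove the existence of your $A,B$ via a disjoint-representatives argument, or reuse Lemma~\ref{lem-doubleclaw} as the paper does.
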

    \begin{proof}
  By Lemma \ref{lem-choosable}, we may assume that $L(u_i), L(v_i) \subseteq L(u)=L(v)$. 
  Since $|L(u_1) \cup L(u_2)| \le |L(u)| = 5m$, $|L(u_1) \cap L(u_2)| \ge m$. Similarly, $|L(v_1) \cap L(v_2)| \ge m$.  
  Let $A$ be an $m$-subset of $L(u_1) \cap L(u_2)$ and $B$ be an $m$-subset of $L(v_1) \cap L(v_2)$. Initially, let $\phi(x) = \emptyset$ for $x \in V(T)$. We add $A$ to $\phi(u_1)$ and $\phi(u_2)$, and add $B$ to $\phi(v_1)$ and $\phi(v_2)$. Now each of $v_1,v_2,u_1,u_2$ has been assigned a set of $m$ colors. Let 
    $L'(u_i)=L(u) - A$, $L'(v_i) = L(v)-B$ for $i=1,2$, $L'(u)=L(u)-A$ and $L'(v)=L(v)-B$. Then $L'$ is an $f'$-list assignment of $T$, where $f'(u)=f'(v)=4m$, $f'(u_i)=f'(v_i)=2m$ for $i=1,2$. Let $g'(u)=g'(v)=m$, $g'(v_i) = g'(u_i) =m$ for $i=1,2$. By Lemma \ref{lem-doubleclaw}, there is an $(L', g')$-coloring $\psi$ of $T$. Then $\phi \cup \psi$ is  an $(L, 2m)$-coloring of $T$. 
\end{proof}

\begin{lemma}
\label{lem-star5}
    Let $T$ be the tree with vertex set $\{ v,v_1,v_2,v_3,v_4,v_5\}$ and edge set $\{ vv_1,vv_2,vv_3,vv_4,vv_5\}$. Let $f\in \mathbb{N}^T$ be the function defined as $f(v)=7m$ and $f(v_i)=3m$ for $i=1,\dots,5$. Then $T$ is $(f,2m)$-choosable.
\end{lemma}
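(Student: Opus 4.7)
The plan is to apply Lemma~\ref{lem-choosable} to reduce to an $f$-list assignment $L$ with $L(v_i) \subseteq L(v)$ for each $i = 1, \ldots, 5$. The task then becomes to find a $2m$-subset $\phi(v) \subseteq L(v)$ with $|\phi(v) \cap L(v_i)| \le m$ for each $i$; having such a $\phi(v)$, the inequality $|L(v_i) \setminus \phi(v)| \ge 2m$ lets us take $\phi(v_i)$ to be any $2m$-subset of $L(v_i) \setminus \phi(v)$, completing the $(L,2m)$-coloring.

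The existence of such a $\phi(v)$ is suggested by the uniform fractional assignment $x_c = 2/7$, which satisfies $\sum_c x_c = 2m$ and $\sum_{c \in L(v_i)} x_c = 6m/7 < m$, so the natural LP relaxation is feasible with slack $m/7$ in each leaf constraint. One way to upgrade this to an integer solution is the probabilistic method: pick $\phi(v)$ uniformly at random among all $2m$-subsets of $L(v)$. Then $X_i := |\phi(v) \cap L(v_i)|$ is hypergeometric with mean $6m/7$, and a tail calculation should show $\Pr[X_i \ge m+1] < 1/5$ for every $m \ge 1$ (by direct enumeration for small $m$ and by a Chebyshev/Hoeffding concentration bound for large $m$). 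A union bound over $i = 1, \ldots, 5$ then yields a $\phi(v)$ with $X_i \le m$ for all $i$.

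A more hands-on alternative, closer in style to Lemmas~\ref{lem-claw}--\ref{lem-doublestar2}, is the following constructive procedure. Let $F_0 = L(v) \setminus \bigcup_i L(v_i)$ and define the degree $d(c) = |\{i : c \in L(v_i)\}|$, so $\sum_c d(c) = 15m$. If $|F_0| \ge 2m$, take $\phi(v)$ to be any $2m$-subset of $F_0$, so the constraints hold trivially. Otherwise, initialize $\phi(v) = F_0$ and enlarge it by greedily adding colors from $\bigcup_i L(v_i)$ in order of increasing degree, skipping any color whose inclusion would make $|\phi(v) \cap L(v_j)| > m$ for some $j$. A Hall-type defect argument, using $\sum_j |\phi(v) \cap L(v_j)| = \sum_{c \in \phi(v)} d(c) \le 5m$ together with the fractional feasibility established above, should show that this greedy cannot stall before $|\phi(v)|$ reaches $2m$: if it did, every remaining color would sit in some ``full'' leaf $L(v_j)$ with $|\phi(v) \cap L(v_j)| = m$, contradicting the counting bound.

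The main obstacle in either approach is the tightness of the setup: the mean $6m/7$ lies only $m/7$ below the threshold $m$, so the required tail estimate is delicate and the greedy must simultaneously balance the five leaf caps. This reflects the careful calibration of the numerics $7m : 3m : 2m : m$ in the statement — decreasing any of these constants would break the argument — and explains why the proof has to exploit all of the slack provided by the fractional assignment $x_c = 2/7$.
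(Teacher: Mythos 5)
Your high-level reduction is sound: after applying Lemma~\ref{lem-choosable}, it suffices to find a $2m$-subset $\phi(v) \subseteq L(v)$ with $|\phi(v) \cap L(v_i)| \le m$ for each $i$, after which each leaf can be colored independently from $L(v_i) \setminus \phi(v)$. In fact, the paper's construction also ends up producing such a $\phi(v)$ implicitly. The issue is that neither of your two arguments for the existence of this $\phi(v)$ is actually closed.

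For the probabilistic route, the required bound $\Pr[X_i \ge m+1] < 1/5$ for the hypergeometric law with parameters $(7m,3m,2m)$ is numerically true but uncomfortably tight: exact computation gives $\Pr[X_i \ge m+1] \approx 0.175, 0.183, 0.183, 0.179$ for $m = 2,3,4,5$, so $5\Pr[X_i \ge m+1]$ peaks around $0.91$. The Chebyshev bound you invoke gives $\Pr \le \frac{120\cdot 49}{343 m}$, which is below $1/5$ only for $m \ge 86$, and even the Hoeffding bound for hypergeometric ($\exp(-2t^2/n)$ with $n = 2m$, $t = m/7+1$) requires roughly $m \ge 80$. So ``direct enumeration for small $m$'' means checking on the order of eighty cases, each close to the edge, which is substantial work and cannot simply be waved at. The simpler Markov-type surrogate $E\!\left[\binom{X_i}{m+1}\right]$ does not work either: for $m=2$ it evaluates to $220/1001 > 1/5$.

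The greedy route has a genuine gap in the stated counting argument. Suppose greedy stalls at $|\phi(v)| = k < 2m$, and let $F$ be the set of full leaves. Every remaining color lies in $\bigcup_{j \in F} L(v_j)$, and since each such $L(v_j)$ contributes at most $3m - m = 2m$ colors outside $\phi(v)$, we get $7m - k \le 2m|F|$, hence $|F| \ge 3$, hence $\sum_j |\phi(v) \cap L(v_j)| \ge 3m$. But your invoked bound $\sum_j |\phi(v) \cap L(v_j)| \le 5m$ is just the trivial cap of $m$ per leaf, and $3m \le 5m$ is no contradiction. The ``Hall-type defect argument'' therefore needs an additional idea (for instance, exploiting that colors in $\phi(v)$ were taken in increasing degree order) that you do not supply; my attempts to supply it do not obviously succeed either.

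The paper avoids both difficulties with a direct, elementary case analysis: it first uses $\sum_{i<j}|L(v_i)\cap L(v_j)| \ge 8m$ to pick a pair (WLOG $v_1,v_2$) with $|L(v_1)\cap L(v_2)| \ge \tfrac45 m$, then splits on the size $a_3$ of the triple intersection $L(v_3)\cap L(v_4)\cap L(v_5)$, and builds $\phi(v)$ together with the $\phi(v_i)$ explicitly so that the constraints $|\phi(v)\cap L(v_i)|\le m$ come out for free. Your framework is conceptually cleaner but, as written, the existence lemma it rests on is not proved; it either needs a rigorous hypergeometric tail estimate uniform in $m$ or a correct deterministic construction such as the paper's.
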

\begin{proof}
    Let $L$ be an $f$-list assignment of $T$. By Lemma \ref{lem-choosable}, we may assume that $L(v_i) \subseteq L(v)$. 
    As 
      
        $$7m \ge |\cup_{i=1}^5 L(v_i)| \ge 15m - \sum_{1\le i< j \le 5}|L(v_i) \cap L(v_j)|,$$  we conclude that there exist $i< j$ such that 
        $$|L(v_i) \cap L(v_j)| \ge \frac 45 m.$$
       
    Assume $|L(v_1)\cap L(v_2)|\ge \frac45m$. Hence   $|L(v_1)\cup L(v_2)|\le 6m-\frac45m$ and $$|L(v)\setminus (L(v_1)\cup L(v_2)|\ge \frac95m.$$ 
    
    For $i=1,2,3$, assume there are $a_i$ colors contained in exactly $i$ of the sets  $ L(v_3),  L(v_4), L(v_5) $. Then  $\sum_{i=1}^3 ia_i =9m$ and $\sum_{i=1}^3 a_i \le 7m$.  Hence $$2a_3+a_2 \ge 2m.$$ 
    
   \noindent
{\bf Case 1} $a_3 \ge \frac{4}{5}m$.

Let $A$ be a  $\frac 45 m$-subset of   $L(v_3)\cap L(v_4)\cap L(v_5)$. Let $B$ be an $m$-subset of $L(v)\setminus (L(v_1)\cup L(v_2)\cup A)$.
      We add $A$ to $\phi(v_i)$ for $i=1,2,3$, and add $B$ to $\phi(v)$. 

   For $i=3,4,5$, choose a  $\frac{6}{5}m$-subset $A_i$ of $L(v_i)-(A \cup B)$, and add $A_i$ to $\phi(v_i)$.
   
   Note that $|L(v) - (A \cup B \cup A_3 \cup A_4 \cup A_5)| \ge \frac 85 m $. Choose an $m$-subset $B'$ of $L(v) - (A \cup B \cup A_3 \cup A_4 \cup A_5)$, and add $B' $ to $\phi(v)$. For $i=1,2$, $|L(v_i) - \phi(v)| = |L(v_i)-B'| \ge 2m$. Choose a $2m$-subset of $L(v_i) - \phi(v)$ and add to $\phi(v_i)$. We obtain an $(L, 2m)$-coloring of $T$. 
   
    \noindent
{\bf Case 2}  $a_3 < \frac 45 m$.

Let $A = L(v_3) \cap L(v_4) \cap L(v_5)$.   Let $B$ be an $m$-subset of $L(v)\setminus (A\cup L(v_1) \cup L(v_2))$. We add $A$ to $\phi(v_i)$ for $i=3,4,5$, and add $B$ to $\phi(v)$.

    Since $2a_3+a_2 \ge 2m$, we have  $a_2 \ge 2(m-a_3)$.  Let $A^{\prime}$ be a $2(m-a_3)$-subset of colors that are contained in exactly two of $L(v_3),L(v_4), L(v_5)$. For $i=1,2,3$, add
    $A' \cap L(v_i)$ to $\phi(v_i)$. For $i=3,4,5$, choose an    
    arbitrary $(2m-|\phi(v_i)|)$-subset $A'_i$  colors from $L(v_i) - \phi(v_i)$, and add $A'_i$  to $\phi(v_i)$. 
    Note that $\sum_{i=3}^5 |A'_i| = 6m- 3a_3 - 4(m-a_3)=2m +a_3$. Hence $$|\phi(v_3) \cup \phi(v_4) \cup \phi(v_5)| = a_3 + 2(m-a_3)+ 2m +a_3 = 4m.$$
    
     Let $B'$ be an $m$-subset of $L(v)\setminus (\phi(v_3)\cup\phi(v_4)\cup \phi(v_5)\cup B)$. Add  $B'$ to $\phi(v)$. 

     For $i=1,2$, $|L(v_i) - \phi(v)| = |L(v_i)-B'| \ge 2m$. Choose a $2m$-subset of $L(v_i) - \phi(v)$ and add to $\phi(v_i)$. We obtain an $(L, 2m)$-coloring of $T$.
\end{proof}

\begin{lemma}
    \label{lem-broom}
    Let $T$ be the tree with vertex set $\{u,w,v, v_1,v_2,v_3\}$ and edge set $\{uw, wv, vv_1,vv_2,vv_3\}$. Let $f \in \mathbb{N}^T$ be the function defined as $f(w)=f(v)=5m$, $f(u) = f(v_1)=f(v_2)=f(v_3)=3m$. Then $T$ is $(f, 2m)$-choosable.
\end{lemma}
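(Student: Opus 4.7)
By Lemma~\ref{lem-choosable} I may assume the lists are nested along each edge, so $L(u)\subseteq L(w)=L(v)$ of cardinality $5m$ and $L(v_i)\subseteq L(v)$ for $i=1,2,3$; write $\Lambda=L(v)$. The plan is to reduce the entire coloring problem to producing a single $2m$-subset $\phi(v)\subseteq\Lambda$ satisfying (i) $|\phi(v)\cap L(v_i)|\le m$ for every $i$, and (ii) $|\phi(v)\cap L(u)|\ge m$. Given such a $\phi(v)$, the rest of the coloring is straightforward: each $\phi(v_i)$ is any $2m$-subset of $L(v_i)\setminus\phi(v)$ (of size $\ge 2m$ by~(i)); since $|\Lambda\setminus(L(u)\cup\phi(v))|=|L(u)\cap\phi(v)|\ge m$ by~(ii), I form $\phi(w)\subseteq\Lambda\setminus\phi(v)$ of size $2m$ as the union of an $m$-subset of $\Lambda\setminus(L(u)\cup\phi(v))$ and an $m$-subset of $L(u)\setminus\phi(v)$ (of size $\ge m$), which guarantees $|\phi(w)\cap L(u)|\le m$; finally $\phi(u)$ is any $2m$-subset of $L(u)\setminus\phi(w)$.

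To construct $\phi(v)$, I split into cases on $p:=|L(u)\setminus(L(v_1)\cup L(v_2)\cup L(v_3))|$. In the easy case $p\ge m$, I take any $m$-subset $X\subseteq L(u)\setminus\bigcup_i L(v_i)$ and any $m$-subset $Y\subseteq\Lambda\setminus X$, and set $\phi(v):=X\cup Y$; this satisfies (ii) because $X\subseteq L(u)$, and (i) because $X$ misses every $L(v_i)$ while $|Y|=m$. In the hard case $p<m$, equivalently $|L(u)\cap\bigcup_i L(v_i)|>2m$, I exploit the pairwise bound $|L(v_i)\cap L(v_j)|\ge m$ (from $|L(v_i)|+|L(v_j)|=6m>|\Lambda|$) and split further on the numbers $k_\ell:=|L(u)\cap L(v_\ell)|\in[m,3m]$. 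If some $k_\ell\le 2m$, I choose $X$ as an $m$-subset of $L(u)\setminus L(v_\ell)$ (of size $3m-k_\ell\ge m$), making the $L(v_\ell)$-constraint automatic, and pick $Y\subseteq\Lambda\setminus L(u)$ of size $m$ to minimize intersections with the other $L(v_i)$'s using $|(\Lambda\setminus L(u))\cap L(v_i)|=3m-k_i$. If every $k_\ell>2m$, then each $|L(u)\setminus L(v_\ell)|<m$ is small and I allow $|X|>m$, choosing $X\subseteq L(u)$ containing $\bigcup_\ell(L(u)\setminus L(v_\ell))$ and completing $\phi(v)$ with $Y\subseteq\Lambda\setminus L(u)$ chosen so each $|\phi(v)\cap L(v_i)|$ stays at most $m$.

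The main technical obstacle is the last sub-case ($p<m$ with every $k_\ell>2m$), where the four sets $L(u),L(v_1),L(v_2),L(v_3)$ are close to one another in $\Lambda$ and the margins on condition~(i) are tight. The verification rests on the estimates $|X\cap L(v_i)|\le|X|+k_i-3m$ (when $X\supseteq L(u)\setminus L(v_i)$) and $|Y\cap L(v_i)|\le 3m-k_i<m$ (for $Y\subseteq\Lambda\setminus L(u)$), which force $|X|$ into the window $[m,\,4m-\max_i k_i]$; the existence of such an $|X|$ and the simultaneous satisfaction of all three constraints will follow from the structural inequality $\sum_\ell|L(u)\setminus L(v_\ell)|<3m$. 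Once (i) and (ii) are verified in every case, the extension described in the first paragraph yields the desired $(L,2m)$-coloring of $T$.
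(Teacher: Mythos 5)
Your first‐paragraph reduction — produce a $2m$‐set $\phi(v)\subseteq\Lambda$ with (i) $|\phi(v)\cap L(v_i)|\le m$ for all $i$ and (ii) $|\phi(v)\cap L(u)|\ge m$, and then peel off $v_1,v_2,v_3,w,u$ — is correct and is exactly the same reduction the paper uses (the paper states (i) as $|\phi(v)-L(v_i)|\ge m$ and (ii) as $|L(v)-(\phi(v)\cup L(u))|\ge m$, which for $|\phi(v)|=2m$ are the same conditions). Your case $p\ge m$ is also fine. However, the hard case $p<m$ is not a proof but a plan with a genuine hole, and the hole is not merely cosmetic. The estimate you invoke,
\[
|\phi(v)\cap L(v_i)|\le \bigl(|X|+k_i-3m\bigr)+\bigl(3m-k_i\bigr)=|X|,
\]
gives the needed bound $\le m$ only when $|X|=m$, yet in your last sub‑case you explicitly take $|X|>m$ in order to swallow $\bigcup_\ell\bigl(L(u)\setminus L(v_\ell)\bigr)$; and when $|X|=m$ the hypothesis $X\supseteq L(u)\setminus L(v_i)$ that the estimate requires can fail, since $\bigl|\bigcup_\ell(L(u)\setminus L(v_\ell))\bigr|$ is only known to be $<3m$, not $\le m$. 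The ``structural inequality'' $\sum_\ell|L(u)\setminus L(v_\ell)|<3m$ does not reconcile these two competing requirements.

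In the middle sub‑case the choice of $\ell$ is also not innocuous. Take $m=2$, $\Lambda=\{1,\dots,10\}$, $L(u)=\{1,\dots,6\}$, $L(v_1)=\{1,2,3,4,7,8\}$, $L(v_2)=\{1,2,5,6,9,10\}$, $L(v_3)=\{3,4,5,6,7,8\}$; then $p=0<m$ and $k_1=k_2=k_3=4=2m$, so all $\ell$ are eligible. Choosing $\ell=1$ forces $X=L(u)\setminus L(v_1)=\{5,6\}$, and $|X\cap L(v_2)|=|X\cap L(v_3)|=2$; but $\Lambda\setminus L(u)=\{7,8,9,10\}$ is partitioned as $L(v_3)\cap(\Lambda\setminus L(u))=\{7,8\}$ and $L(v_2)\cap(\Lambda\setminus L(u))=\{9,10\}$, so every legal $Y$ of size $2$ pushes $|\phi(v)\cap L(v_2)|$ or $|\phi(v)\cap L(v_3)|$ above $m$. (The construction does succeed with $\ell=2$, $X=\{3,4\}$, $Y=\{9,10\}$, but nothing in your sketch selects that $\ell$.) So before this counts as a proof you must make the choice of $\ell$, the size of $|X|$, and the choice of $Y$ genuinely quantitative, with the competing constraints verified to be simultaneously satisfiable in every sub‑case.
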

\begin{proof}
    Let $L$ be an $f$-list assignment of $T$. By Lemma \ref{lem-choosable}, we may assume that $L(w)=L(v)$, $L(u), L(v_1), L(v_2), L(v_3) \subseteq L(v)$. 

    Initially, let $\phi(x)= \emptyset$ for all $x \in V(T)$.
    %We shall add colors to $\phi(x)$ in a few steps to obtain an $(L, 2m)$-coloring of $T$.
    
    Since $|L(v_1)|+|L(v_2)|+|L(v_3)| = 9m$, there is an $m$-subset $A$ of $L(v)$ such that each color in $A$ is contained in at most one of $L(v_1), L(v_2), L(v_3)$. 
    Add $A$ to $\phi(v)$.  

    Let $L'(v_i) = L(v_i) - A$ for $i=1,2,3$. Then $|L'(v_i)|=3m-t_i$, where  $t_i = |A \cap L(v_i)| $. 
 
   Let $t= |A \cap L(u)|$, and let $B'$ be a $t$-subset of $L(v) - (L(u) \cup A)$. 
   Let $D= (L(u)-A) \cup B'$. Thus $|D|=3m$, $|D \cup A|=4m$ and hence $|L(v)-(D \cup A)|=m$. 
  Note that $|D-L'(v_i)| \ge t_i$. 
   Choose an $t_i$-subset $A_i$ of $D-L'(v_i)$, and add $A_i$ to $\phi(v)$. Let $s= |A_1 \cup A_2 \cup A_3|$. Then  $ s \le |A_1|+|A_2|+|A_3|=m$. So now $\phi(v) =m+s \le 2m$. The key property of the set $\phi(v)$ is that 
   \begin{itemize}
       \item[(i)] For $i=1,2,3$, $|\phi(v) - L(v_i)| \ge m$.
       \item[(ii)] $L(v)-(D \cup A) \subseteq  L(v)  - (\phi(v)  \cup L(u))$ and hence $|L(v)  - (\phi(v)  \cup L(u))|    \ge m$.
   \end{itemize}
   Arbitrarily choose $m-s$ colors from 
$D$ and add to $L(v)$. Now $|\phi(v)|=2m$ and  (i) and (ii) still hold. 

It follows from (i) that $|L(v_i) - \phi(v)| \ge 2m$. Let $\phi(v_i)$ be a $2m$-subset of $L(v_i) - \phi(v)$. 
As $L(w)=L(v)$, it follows from (ii) that there is a $2m$-subset $B$ of $L(w)-\phi(v)$ with $|B-L(u)| \ge m$. Let $\phi(w)=B$. Then $|L(u)-\phi(w)| \ge 2m$, and hence $\phi$ can be extended to an $(L,2m)$-coloring of $T$ (by assigning to $u$ a $2m$-subset from $L(u)-\phi(w)$). 
\end{proof}

\section{Sketch of the proof of Theorem \ref{Th:2} and some preparations}
\label{sec-preparation}

  Assume Theorem \ref{Th:2} is not true, and $G \in \mathcal{P}_{4,5}$ is a counterexample with minimum number of vertices. Thus there is a  $7m$-cover $\mathcal{H}=(L,M)$    of $G$  and  an 
$(\mathcal{H}, 2m)$-coloring of $G[D]$ ($D=b(f_0)$ is a good cycle)  that cannot  be extended to an   $(\mathcal{H}, 2m)$-coloring of $G$.  By the minimality of $G$, if $Z$ is a non-empty  set of internal vertices, then $\phi$ can be extended to an $(\mathcal{H}, 2m)$-coloring of $G-Z$. 

Without loss of generality, we may assume that $L(v) = [7m] \times \{v\}$ for each vertex $v$. An edge $e=uv$ is called  \textit{straight} with respect to $\mathcal{H}$ if 
$M_e= \{(i,u)(i,v): i \in [7m]\}$. It is well-known \cite{dvovrak2018correspondence} and easy to see that if $T$ is a tree in $G$, then by a permutation of colors, if needed, we may assume that all edges of $T$ are straight. 

In the next  section, we shall prove some structural property of $G$. The important properties are of the form that $G$ does not have a set $Z$ of internal vertices such that $G[Z]$ induces a special tree $T$, and vertices of $T$ have special neighborhood in $G-Z$.  

To prove this property of $G$, we assume to the contrary that such a set $Z$ exists. 
By the minimality of $G$, there is   an extension $\phi$ of $\phi|_D$ to $G-Z$.

For $v \in Z$, let
$N_{G-Z}(v) = N_G(v) -Z$, and let $d_{G-Z}(v)=|N_{G-Z}(v)|$. 
We denote by $\mathcal{H}^{\phi} = (L', M')$ the cover of $G[Z]$ defined as follows: 
\begin{itemize}
  \item   For $v \in Z$, $L'(v)=L(v) \setminus N_M(\phi(N_{G-Z}(v)))$. 
  \item $M'$ is the restriction of $M$ to $\cup_{v \in Z}L'(v)$.
\end{itemize}

Then $\mathcal{H}^{\phi}$ is an $f$-cover of $G[Z]$, where $f(v) \le  |L'(v)|$. We shall apply   Lemmas in Section \ref{sec-f2m} to prove that $G[Z]$ is $(f, 2m)$-DP-colorable, and hence has an $(\mathcal{H}^{\phi}, 2m)$-coloring $\psi$. The union $\phi \cup \psi$ is then an extension of $\phi|_D$ to $G$, which is a contradiction.

As $L'(v) = L(v) \setminus N_M(\phi(N_{G-Z}(v)))$, and $|L(v)|=7m$, $|\phi(u)|=2m$ for each $u \in N_{G-Z}(v)$, we conclude that $|L'(v)| \ge (7-2 d_{G-Z}(v))m$ for each vertex $v \in Z$. In some cases, we need a larger lower bound for $|L'(z)|$ for some vertex $z \in Z$ so that the lemmas from the last section can be applied. This is achieved as follows:

We choose two non-adjacent vertices $x,y \in N_{G-Z}(z)$. By permute colors if needed, we   assume that both edges $xz, yz$ are straight with respect to $\mathcal{H}$.
We identify $x,y$ into a new vertex $v^*$.  Since $G$ has no 4-cycle, $z$ is the only common neighbor of $x$ and $y$. Hence the identification of $x$ and $y$ does not create  parallel edges. Denote by $G'$ the resulting graph. We denote by $\mathcal{H}=(L,M)$ the ``inherited" $7m$-cover of $G'$, where $L(v^*)=[7m] \times \{v^*\}$, and for each vertex $u \in N_G(x) \cup N_G(y) - Z$,   $(i,v^*) (j,u) \in M_{v^*u}$ if and only if $(i,x)(j,u) \in M_{xu}$ or $(i,y)(j,u) \in M_{yu}$.

If $G' \in \mathcal{P}_{4,5}$ and $G[D]=G'[D]$, then by the minimality of $G$, there is an extension $\phi$ of $\phi|_D$ to $G'$. 

We obtain an extension, also denote by $\phi$, of $\phi|_D$ to $G-Z$ by assigning the color set $\phi(v^*)$ to $x$ and $y$, i.e., let $\phi(x)= \phi(y) = \phi(v^*)$. Since both edges $zx,zy$ are straight with respect to $\mathcal{H}$, we conclude that 
$L(z)-N_M(\phi(x) \cup \phi(y)) = L(z)-N_M(\phi(x))$.  Hence $|L'(z)| \ge (7-2 d_{G-Z}(z)+2)m$.

For the plan above to work, it remains to show that 
$G' \in \mathcal{P}_{4,5}$ and $G[D]=G'[D]$.  The following lemma shows that this is true under some weak condition.

\begin{lemma}
    \label{lem-p45} Let $Z$ be a subset of internal vertices, $z$ is a vertex in $Z$ with $d_G(z)\ge 4$ $x,u,y, v \in G$ are four neighbors of $z$   arranged around $z$ in this cyclic order and $x,y \notin Z$.   If we   identify $x$ and $y$ into a new vertex $v^*$ in $G-Z$, the resulting graph  $G^{\prime} \in \mathcal{P}_{4,5}$ and $G^{\prime}[D]=G[D]$.
\end{lemma}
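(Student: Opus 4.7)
The plan is to verify three properties of $G'$ in turn: (i) $G'[V(D)]=G[V(D)]$, (ii) $G'$ is planar, and (iii) $G'$ has no cycle of length 4 or 5.

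For (i), since $Z$ consists of internal vertices we have $V(D)\cap Z=\emptyset$, so $V(D)\subseteq V(G-Z)$; under the standing assumption (implicit in the lemma's intended use) that $x,y\notin V(D)$, identifying $x,y$ leaves $V(D)$ fixed, and since $v^*\notin V(D)$ no new edge can appear inside $G'[V(D)]$. For (ii), working in the planar embedding of $G$, deleting $z$ merges the faces incident to $z$ into a region whose boundary walk traverses the neighbors of $z$ (including $x,u,y,v$) in their cyclic order; deleting the remaining vertices of $Z$ can only enlarge this face. Hence $x$ and $y$ share the boundary of a common face of $G-Z$, and we identify them by pinching an arc through that face. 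No multi-edge arises because $G$ being 4-cycle-free forces $z$ to be the unique common neighbor of $x$ and $y$, and $z\in Z$.

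The substantive part is (iii). Let $C$ be a cycle of length $k\in\{4,5\}$ in $G'$. If $v^*\notin V(C)$, then $C$ is already a cycle of $G-Z\subseteq G$, contradicting $G\in\mathcal{P}_{4,5}$. Otherwise, since $x$ and $y$ have no common neighbor in $G-Z$, each edge of $C$ at $v^*$ lifts unambiguously to an edge incident to $x$ or to $y$ in $G-Z$. If both lift at the same vertex, $C$ lifts to a $k$-cycle in $G$, a contradiction. In the mixed case, $C$ lifts to a simple path $P$ of length $k$ from $x$ to $y$ in $G-Z$, and concatenating $P$ with the 2-path $xzy$ in $G$ yields a cycle $C^{*}$ of length $k+2$ in $G$. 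For the analogous case $k=3$, $C^*$ would already be a 5-cycle in $G$, so such $C$ cannot occur.

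The main obstacle is the mixed sub-case when $k=4$ or $5$: here $C^*$ has length 6 or 7, which $\mathcal{P}_{4,5}$ permits. To rule it out, I would first verify that $C^*$ is chord-free by a short distance analysis: any chord of $C^*$, combined with one of the two $C^*$-arcs joining its endpoints, closes a cycle of length 3, 4, or 5, and the cases producing a forbidden 4- or 5-cycle eliminate every potential chord. Second, the cyclic-order hypothesis $x,u,y,v$ around $z$ implies that the edges $zu$ and $zv$ leave $z$ into the two different sides of $C^*$ in the plane, so (say) $u$ lies inside $C^*$ and $v$ outside. Finally, since $z$ is internal the cycle $D$ encloses $z$ and hence at least one side of $C^*$; combining chordlessness of $C^*$ with the planar placement of $u,v$ and the enclosing cycle $D$, I would produce a short path across $C^*$ that closes a 4- or 5-cycle in $G$, giving the required contradiction. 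This planar-topological case analysis, which is precisely where the degree-$\ge 4$ and cyclic-order hypothesis on $z$ become essential, is where I expect the most delicate part of the proof to lie.
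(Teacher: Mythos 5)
Your proof has two genuine gaps, both at the places you yourself flag as delicate.

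First, in part (i) you write that "under the standing assumption (implicit in the lemma's intended use) that $x,y\notin V(D)$" the identification leaves $G[D]$ untouched. But no such assumption is made in the lemma, and it cannot be assumed: the applications of this lemma (Lemmas~\ref{lem:11}, \ref{lem:12}, \ref{lem:15}) identify vertices $x,y$ that are arbitrary neighbors of $z$ and may well lie on $D$. The paper's proof handles this by \emph{deriving} that the bad configurations are impossible: if both $x,y\in V(D)$ then $xzy$ is a splitting path of length $2$, so by Lemma~\ref{lem:8} it cuts off a triangle $xzy$, which is then a separating $3$-cycle (separating $u$ from $v$), impossible; and if one of $x,y$ is on $D$ and the other has a neighbor on $D$, then $G$ has a splitting path of length $3$, contradicting Lemma~\ref{lem:9}. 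These splitting-path reductions are the substance of showing $G'[D]=G[D]$, and they are missing from your argument.

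Second, for part (iii) you correctly lift a $k$-cycle ($k\in\{4,5\}$) through $v^*$ to a path $P$ from $x$ to $y$ in $G-Z$ and form the cycle $C^*=P\cup xzy$ of length $k+2\in\{6,7\}$, and you correctly observe that the cyclic-order hypothesis places $u$ and $v$ on opposite sides of $C^*$. But from there you head in the wrong direction: you try to manufacture a forbidden $4$- or $5$-cycle by a chord/short-path analysis of $C^*$. This cannot be made to work in general — a $6$- or $7$-cycle in a graph of $\mathcal{P}_{4,5}$ is perfectly legal and need not admit any short crossing path. The intended contradiction is of a different kind: since $u$ and $v$ are neighbors of $z$ lying on opposite sides of $C^*$ and not on $C^*$ (their presence on $C^*$ would produce a forbidden chord cycle), both $\operatorname{int}(C^*)$ and $\operatorname{ext}(C^*)$ are nonempty, so $C^*$ is a \emph{separating good cycle} ($|C^*|\le 7$). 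The minimal counterexample $G$ has no separating good cycle (trivial property~3 of Section~\ref{sec-property}); this is the contradiction, and it is exactly why the hypothesis $d_G(z)\ge 4$ is needed. Your proposal never invokes the no-separating-good-cycle property, which is the single key tool for this lemma.
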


\begin{proof}
    First we show that $G^{\prime} \in \mathcal{P}_{4,5}$. If $G'$ has a 5-cycle $C$, then $C$ contains $v^*$. Assume $C=  [v^*z_1z_2z_3z_4]$. Then $C'=[xz_1z_2z_3z_4yz]$ is a 7-cycle in $G$. As $G$ has no 4- or 5-cycles, $u,v \notin C'$. Hence $C'$ is a separating good cycle, that separates $u$ from $v$, a contradiction. Similarly $G'$ has no 4-cycles. Hence $G' \in \mathcal{P}_{4,5}$. 
    
    To show that $G'[D]=G[D]$, it amounts to show that 
   \begin{itemize}
    \item  At least one of $x,y$ is not on $D$;
    \item  If one of $\{x,y\}$ lies on $D$, then the other has no neighbor in $D$ (for otherwise $G'$ contains a new edge connecting two vertices of $D$). 
   \end{itemize}
     
    If $x,y\in D$, then $xzy$ is a splitting path of length 2.  By Lemma \ref{lem:8}, $xy$ is an edge of $D$. Then  $C=xzyx$ is a separating 3-cycle which separates $u$ from $z$, a contradiction.
    
    If one of $\{x,y\}$ lies on $D$ then the other has a neighbor in $D$, then $G$ has a splitting path of length 3 containing $xzy$, contradicting Lemma \ref{lem:9}.

    Clearly the boundary of the unbounded face of $G^{\prime}$ is still $D$ with $|D|\le 7$, that is, $D$ is still good with respect to $G^{\prime}$. Namely $G^{\prime}[D]=G[D]$.
\end{proof}

\section{ Reducible configurations}
\label{sec-property}

In this section, we prove a certain substructures cannot be contained in $G$. Such substructures are called reducible configurations.

The following properties of $G$ are trivial:
\begin{enumerate}
    \item $G$ is 2-connected, for otherwise, we can extend the coloring $\phi$ to the blocks of $G$ sequentially.
    \item $G$ has at least one internal vertex, and each internal vertex $v$ has degree at least 3: If $d_G(v) \le 2$, then we can first extend $\phi$ to an $(\mathcal{H},2m)$-coloring of $G-v$, and then extend it to $G$ by assigning to $v$ a set of $2m$ colors in $L(v)$ not adjacent to any colors assigned to its neighbors. 
    \item $G$ has no separating good cycles: If $C$ is a separating cycle with $|C| \le 7$, then we first extend $\phi$ to ${\rm ext}[C]$ (by the minimality of $G$), and then extend the $(\mathcal{H},2m)$-coloring of $C$ to ${\rm int}[C]$. 
    \item $D$ has no chord: Otherwise, the chord separate $D$ into two cycles, and at least one of the cycle is a separating good cycle.
\end{enumerate}

\begin{lemma}
\label{lem:8}
    If $P=xyz$ is a splitting path of $G$, then $xz\in E(D)$, i.e.,  $P$ cuts off a triangle from $G$.
\end{lemma}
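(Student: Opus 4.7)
The plan is to argue by contradiction, using the fact (recorded as trivial property 4) that the boundary cycle $D$ has no chord, together with the hypotheses $|D| \le 7$ and $G \in \mathcal{P}_{4,5}$. First I would record the setup: since $P = xyz$ is a splitting path, $x, z \in V(D)$ and $y$ is internal, so in particular $y \notin V(D)$. Then I would suppose for contradiction that $xz \notin E(D)$. Because $x, z$ both lie on $D$, if $xz$ were an edge of $G$ it would be a chord of $D$; but $D$ has no chord, so $xz \notin E(G)$ either.

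Next I would use $D$ to cut out two cycles. The cycle $D$ is split by $\{x,z\}$ into two internally disjoint $xz$-paths $D_1, D_2$ with $|D_1| + |D_2| = |D| \le 7$; since $xz \notin E(D)$, each $D_i$ has length at least $2$. Then the two cycles
\[
C_i = D_i \cup P \qquad (i=1,2)
\]
have lengths $|C_i| = |D_i| + 2 \ge 4$. Since $G \in \mathcal{P}_{4,5}$, no cycle has length $4$ or $5$, so in fact $|C_i| \ge 6$ for $i=1,2$.

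Adding the two inequalities gives $|C_1| + |C_2| \ge 12$, while on the other hand
\[
|C_1| + |C_2| = |D_1| + |D_2| + 4 = |D| + 4 \le 11.
\]
This is the desired contradiction, forcing $xz \in E(D)$; then $xyzx$ is a triangle sharing the edge $xz$ with $D$, so $P$ cuts off a triangle from $G$. I do not anticipate any real obstacle here: the proof is essentially a length-count, and the only subtlety is making sure we may genuinely conclude $xz \notin E(G)$ from $xz \notin E(D)$, which is why the no-chord property of $D$ is invoked at the very start.
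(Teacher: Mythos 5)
Your proposal is correct and takes essentially the same approach as the paper: split $D$ at $x$ and $z$, form the two cycles $D_i \cup P$, use the absence of $4$- and $5$-cycles to force each to have length at least $6$, and derive $12 \le |D|+4 \le 11$. The only difference is that you make explicit the use of the no-chord property of $D$ to pass from ``$xz \notin E(D)$'' to ``$xz \notin E(G)$'' before counting, whereas the paper simply opens with the assumption $xz \notin E(G)$ and leaves that reduction implicit; your version is slightly more careful, not different in substance.
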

\begin{proof}
    Suppose for a contradiction that $xz\notin E(G)$. Let $P_1$ and $P_2$ be the two paths of $D$ between $x$ and $z$. Let $D_i=P_i\cup P$  ($i=1,2$). As $G$ has neither 4- or 5-cycles, $|D_i|\ge6$. It follows that $12\le |D_1|+|D_2|=|D|+4\le 11$, a contradiction.
\end{proof}

\begin{lemma}
\label{lem:9}
    $G$ has no splitting path of length 3.
\end{lemma}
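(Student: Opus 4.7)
The plan is a proof by contradiction. Suppose $P = xy_1y_2z$ is a splitting path of length $3$, with $x,z \in V(D)$ external and $y_1,y_2$ internal. Let $P_1, P_2$ denote the two $x$--$z$ subpaths of $D$, and set $D_i = P_i \cup P$, a cycle of length $|P_i| + 3$. Since $G \in \mathcal{P}_{4,5}$ forbids cycles of length $4$ and $5$, we have $|D_i| \ge 6$, hence $|P_i| \ge 3$; combined with $|P_1|+|P_2| = |D| \le 7$, this forces $|D| \in \{6,7\}$. After relabeling, $|P_1| = 3$, and both $D_1, D_2$ are good cycles.

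The next step is to use the absence of separating good cycles (already established for $G$). Because $|P_{3-i}| \ge 3$, the path $P_{3-i}$ contributes at least two vertices of $G$ to ${\rm ext}(D_i)$, so the non-separating property of $D_i$ forces ${\rm int}(D_i) = \emptyset$ for both $i=1,2$. Consequently, $y_1, y_2$ are the only internal vertices of $G$, and every neighbor of $y_1$ lies in $V(D) \cup \{y_2\}$ (symmetrically for $y_2$).

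Since $y_1$ is internal, $d_G(y_1) \ge 3$, so $y_1$ has at least one neighbor $w \in V(D)\setminus\{x\}$. I would enumerate each candidate $w$; in each case either the length-$2$ splitting path $xy_1w$ violates Lemma~\ref{lem:8} (because $xw \notin E(D)$), or the edge $y_1w$, combined with $P$ and a suitable arc of $D$, produces a $4$-cycle or a $5$-cycle. When $|D| = 6$ and $P_2 = xc_1c_2z$, every candidate $w \in \{a, b, z, c_1, c_2\}$ is ruled out in this way (e.g.\ $y_1c_1$ creates the $5$-cycle $y_1c_1c_2zy_2y_1$), leaving $d_G(y_1) \le 2$, a contradiction.

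The main obstacle is the case $|D| = 7$. Write $P_1 = xabz$ and $P_2 = xc_1c_2c_3z$. The same enumeration leaves $w = c_1$ as the unique admissible neighbor of $y_1$ in $V(D)\setminus\{x,y_2\}$, and by the $(x \leftrightarrow z,\ y_1 \leftrightarrow y_2)$ symmetry, $w = c_3$ is the only admissible extra neighbor of $y_2$. Both edges $y_1c_1$ and $y_2c_3$ must therefore be present in $G$, but then $y_1c_1c_2c_3y_2y_1$ is a $5$-cycle, contradicting $G \in \mathcal{P}_{4,5}$ and finishing the proof.
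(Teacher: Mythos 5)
Your proof is correct and takes essentially the same approach as the paper: form the two cycles $D_1, D_2$ from $P$ and the arcs of $D$, use the $\{4,5\}$-cycle-free and separating-good-cycle-free properties to pin down $|D_1|,|D_2|$ and to force any third neighbor of $y_1$ or $y_2$ onto $D$, and then eliminate each candidate position via Lemma~\ref{lem:8} or a forbidden $4$- or $5$-cycle. You simply carry out the final case analysis more explicitly than the paper does, which compresses it into the single remark that a boundary neighbor yields a $5$-cycle.
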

\begin{proof}
    Suppose for a contradiction that $G$ has a splitting path of length 3, say $P=wxyz$ where $w,z\in D$. Let $P_1$ and $P_2$ be the two paths of $D$ between $w$ and $z$. Let $D_i=P_i\cup P$. We may assume that $|D_1| \le|D_2|$. Clearly $|D_1| \ge 4$. As $G$ has no 4- or 5-cycles, $|D_1|\ge 6$. Since $|D_1|+|D_2|=|D|+6 \le 13$, $|D_1|=6$ and $|D_2|\in {6,7}$. As $d_G(x), d_G(y) \ge 3$, $x$ has a neighbor $x^{\prime}$ and $y$ has a neighbor $y'$ not on $P$. If $x'$ or $y'$ is an internal vertex, then one of $D_1,D_2$ is a good separating cycle, a contradiction. If $x', y' \in D$, then $G$ has a 5-cycle, a contradiction. 
\end{proof}

\begin{lemma}
\label{lem:10}
    If $v$ is an internal 4-vertex in $G$, then $v$ has at most two  {pairwise non-adjacent} internal 3-neighbors.
\end{lemma}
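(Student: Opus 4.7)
The plan is to reduce to the claw lemma (Lemma~\ref{lem-claw}) via the standard deletion/minimality machinery set up in Section~\ref{sec-preparation}. Suppose for contradiction that the internal 4-vertex $v$ has three pairwise non-adjacent internal 3-neighbors $u_1,u_2,u_3$; let $u_4$ be the fourth neighbor of $v$. Set $Z=\{v,u_1,u_2,u_3\}$. Since all four vertices of $Z$ are internal, $Z$ is a non-empty set of internal vertices, so by the minimality of $G$ the given coloring $\phi|_D$ extends to an $(\mathcal{H},2m)$-coloring $\phi$ of $G-Z$.

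Next, I would examine the induced subgraph $G[Z]$ and the residual list sizes. Because $u_1,u_2,u_3$ are pairwise non-adjacent and each is adjacent to $v$, $G[Z]$ is exactly the claw with center $v$ and leaves $u_1,u_2,u_3$. The vertex $v$ has $d_{G-Z}(v)=1$ (its only outside neighbor is $u_4$), while each $u_i$ has $d_{G-Z}(u_i)=2$ (being a 3-vertex with one neighbor $v$ in $Z$). Therefore the residual cover $\mathcal{H}^\phi=(L',M')$ on $G[Z]$ satisfies
\[
|L'(v)|\ge (7-2)m=5m,\qquad |L'(u_i)|\ge(7-4)m=3m\quad(i=1,2,3).
\]

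Finally, I would invoke Lemma~\ref{lem-claw}, which says that the claw with these list sizes is $(f,2m)$-choosable, and then apply Lemma~\ref{lem-choosable} to upgrade choosability to DP-colorability. This produces an $(\mathcal{H}^\phi,2m)$-coloring $\psi$ of $G[Z]$, and $\phi\cup\psi$ is an $(\mathcal{H},2m)$-coloring of $G$ extending $\phi|_D$, contradicting the choice of $G$ as a counterexample.

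I do not expect a significant obstacle here: the argument is essentially a one-shot application of the claw lemma, and unlike the harder reductions later in Section~\ref{sec-property} it does not require the vertex-identification trick from Lemma~\ref{lem-p45} (we do not need to boost $|L'(v)|$ beyond $5m$, and $Z$ consists entirely of internal vertices so we do not need to worry about $D$). The only subtle point is verifying that $G[Z]$ is genuinely a claw, which is immediate from the hypothesis of pairwise non-adjacency.
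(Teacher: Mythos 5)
Your proof is correct and follows essentially the same route as the paper: delete $Z=\{v,u_1,u_2,u_3\}$, extend $\phi|_D$ to $G-Z$ by minimality, compute the residual list sizes $5m$ at $v$ and $3m$ at each leaf, and finish with Lemma~\ref{lem-claw} (together with Lemma~\ref{lem-choosable} for the DP upgrade). Your added observations — that $G[Z]$ really is a claw because the $u_i$ are pairwise non-adjacent, that $d_{G-Z}(v)=1$ and $d_{G-Z}(u_i)=2$, and that no identification trick is needed here — are all accurate and merely make explicit what the paper leaves implicit.
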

\begin{proof}
    Suppose for a contradiction that $v$ has three  {pairwise non-adjacent} internal 3-neighbors, say $v_i,(i=1,2,3)$. Let $Z=\left\{v,v_1,v_2,v_3 \right\}$. Let $\phi$ be an extension of $\phi|_D$  to $G-Z$, $\mathcal{H}^{\phi}$ is an $f$-cover of $G[Z]$, where $f(v)=5m$ and $f(v_1)=f(v_2)=f(v_3)=3m$. By Lemma \ref{lem-claw}, $G[Z]$ has an $(\mathcal{H}^{\phi}, 2m)$-coloring $\phi$. The union $\phi \cup \psi$ is an extension of $\phi|_D$ to $G$, a contradiction.
\end{proof}

\begin{figure}[htp]
    \centering
    \includegraphics[width=13cm]{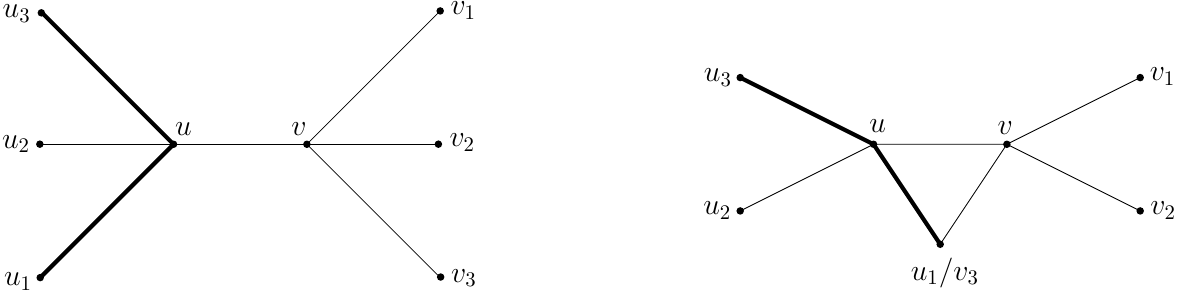}
    \caption{\label{fig:ori2} The configuration in Lemma \ref{lem:11}, where thick line indicate straight edges with respect to $\mathcal{H}$.}
\end{figure}

\begin{lemma}
\label{lem:11}
   If $u, v$ are two adjacent $4$-vertices, the other neighbors of $u$ are $u_1,u_2,u_3$, the other neighbors of $v$ are $v_1,v_2,v_3$, and these vertices are arranged around the edge $uv$ in the cyclic order $u_1,u_2,u_3,v_1,v_2,v_3$, with possibly $u_1=v_3$. Then  
   either $u_2,u_3$ are adjacent, or at least one of $u_2,u_3$ is not an internal 3-vertex. Similarly,  either $v_1,v_2$ are adjacent, or at least one of $v_1,v_2$ is not an internal 3-vertex.
\end{lemma}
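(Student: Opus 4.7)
The plan is a reducibility argument in the spirit of Section~\ref{sec-preparation}. Suppose for contradiction that $u_2$ and $u_3$ are non-adjacent internal $3$-vertices. I take $Z=\{u,v,u_2,u_3\}$. The labels $u_1,u_2,u_3,v_1,v_2,v_3$ are pairwise distinct except possibly for the coincidence $u_1=v_3$ (the configurations $u_1=v_3$ and $u_3=v_1$ cannot both occur, else $uu_1vu_3u$ would be a forbidden $4$-cycle), so $u_3\ne v_1$ and consequently neither $u_2$ nor $u_3$ is a neighbor of $v$. Thus $G[Z]$ is a claw centered at $u$ with leaves $u_2,u_3,v$. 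The strategy is to extend $\phi|_D$ to $G-Z$ by the minimality of $G$ (after a well-chosen identification) and then finish with Lemma~\ref{lem-claw}, which requires list size $5m$ at the center and $3m$ at each leaf.

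With any extension of $\phi|_D$ to $G-Z$ the crude degree bounds give $|L'(u)|\ge 5m$ (as $d_{G-Z}(u)=1$) and $|L'(u_2)|,|L'(u_3)|\ge 3m$ (as $d_{G-Z}(u_i)=2$), but only $|L'(v)|\ge m$ because $d_{G-Z}(v)=3$. To boost $|L'(v)|$ up to $3m$ I apply the identification technique: in the cyclic order $v_1,v_2,v_3,u$ of the neighbors of $v$ around $v$, the only opposite pair lying outside $Z$ is $\{v_1,v_3\}$ (the pair $\{v_2,u\}$ meets $Z$; also $v_1\notin Z$ since $u_3\ne v_1$, and $v_3\notin\{u,v,u_2\}$, so $v_3\notin Z$ even in the degenerate case $u_1=v_3$). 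The argument therefore splits according to whether $v_1v_3\in E(G)$.

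In the case $v_1v_3\notin E(G)$, Lemma~\ref{lem-p45} applied with $z=v$, $x=v_1$, $y=v_3$ guarantees that identifying $v_1$ and $v_3$ into a new vertex $v^*$ produces a graph $G'\in\mathcal{P}_{4,5}$ with $G'[D]=G[D]$. After permuting colors so that the edges $vv_1$ and $vv_3$ are straight with respect to $\mathcal{H}$, the minimality of $G$ yields an $(\mathcal{H},2m)$-coloring $\phi$ of $G'$; setting $\phi(v_1)=\phi(v_3)=\phi(v^*)$ extends $\phi|_D$ to $G-Z$, and straightness makes the colors forbidden on $v$ by $v_1$ and by $v_3$ coincide, improving the bound to $|L'(v)|\ge(7-2\cdot 2)m=3m$. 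Lemma~\ref{lem-claw} then extends $\phi$ to $G[Z]$, producing an $(\mathcal{H},2m)$-coloring of $G$ extending $\phi|_D$, which contradicts the choice of $G$.

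The main obstacle is the remaining case $v_1v_3\in E(G)$, which I would rule out by planarity. Here $C=[vv_1v_3]$ is a $3$-cycle, hence a good cycle. In the planar embedding, the cyclic order $v_1,v_2,v_3,u$ around $v$ places the edges $vv_2$ and $vu$ on opposite sides of the chord $v_1v_3$ at $v$, so the vertices $v_2$ and $u$ lie in different components of the complement of $C$; consequently both $\mathrm{int}(C)$ and $\mathrm{ext}(C)$ are non-empty, making $C$ a separating good cycle and contradicting the fact that the minimum counterexample $G$ contains no separating good cycle. The symmetric statement about $v_1,v_2$ then follows by interchanging the roles of $u$ and $v$ (relabeling $u_i\leftrightarrow v_{4-i}$).
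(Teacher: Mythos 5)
Your proof is correct and follows essentially the same approach as the paper's: choose $Z$ consisting of the two $4$-vertices plus two non-adjacent $3$-neighbors so that $G[Z]$ is a claw, identify the opposite pair of remaining neighbors via Lemma~\ref{lem-p45} to boost one list size to $3m$, and finish with Lemma~\ref{lem-claw}. (The paper treats the symmetric half with $Z=\{u,v,v_1,v_2\}$ and identifies $u_1,u_3$; your side case $v_1v_3\in E(G)$ is redundant, since the no-separating-good-cycle property already forces $v_1,v_3$ non-adjacent, but your explicit argument for it is correct.)
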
 

\begin{proof}
    Suppose for a contradiction that $v_1,v_2$ are not adjacent, and both $v_1,v_2$ are internal 3-vertices.  
    
    We may assume that the edges $u_3u, u_1u$ are straight with respect to  $\mathcal{H}$. Let $Z=\{u,v, v_1, v_2\}$. Let $G'$ be obtained from $G-Z$  by  identifying $u_3$ with $u_1$ into a new vertex $u^*$. By  Lemma \ref{lem-p45},   $G^{\prime} \in \mathcal{P}_{4,5}$ and 
    $G'[D] = G[D]$.  Thus $\phi|_D$ has an extension $\phi$ to $G'$.
    
    Note that $\mathcal{H}^{\phi}$ is an $f$-cover of $G[Z]$, where $f(v)=5m$ and $f(u)=f(v_1)=f(v_2)=3m$. 
    By Lemma \ref{lem-claw}, $G[Z]$ has an $(\mathcal{H}^{\phi}, 2m)$-coloring $\psi$. The union $\phi \cup \psi$ is an extension of $\phi|_D$ to $G$, a contradiction. 
\end{proof}

\begin{lemma}
\label{lem:12}
    Let $T=[uvw]$ be a triangle in $G$. If $v$ is a poor 3-vertex or a poor 4-vertex, and $u$  is internal, then $d_G(u) \ge 5$.
\end{lemma}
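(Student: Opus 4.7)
The plan is to argue by contradiction: assume $d_G(u)\le 4$, so $d_G(u)\in\{3,4\}$ since $u$ is internal, and derive a contradiction by extending $\phi|_D$ to all of $G$. I split into two cases according to whether $v$ is a poor $3$-vertex or a poor $4$-vertex, and in each case choose a set $Z$ of internal vertices so that $G[Z]$ matches a tree handled by Section~\ref{sec-f2m}.

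In the poor $3$-vertex case, let $v_1$ be the internal isolated $3$-neighbor of $v$ and set $Z=\{u,v,v_1\}$; since $v_1$ is isolated in $N_G(v)$, $uv_1\notin E(G)$, so $G[Z]$ is the path $uvv_1$. In the poor $4$-vertex case, let $v_1,v_2$ be the two internal isolated $3$-neighbors of $v$ and set $Z=\{u,v,v_1,v_2\}$; since $v_1,v_2$ are both isolated in $N_G(v)$, the vertices $u,v_1,v_2$ are pairwise non-adjacent, so $G[Z]$ is a claw with center $v$. In either case, by minimality there is an extension $\phi$ of $\phi|_D$ to $G-Z$, and the cover $\mathcal{H}^{\phi}$ on $G[Z]$ satisfies $|L'(v)|\ge 5m$ and $|L'(v_i)|\ge 3m$. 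The naive bound for $u$ is $|L'(u)|\ge (7-2(d_G(u)-1))m$, which is $3m$ when $d_G(u)=3$ but only $m$ when $d_G(u)=4$; in the latter subcase we boost $|L'(u)|$ to $3m$ via identification.

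To handle $d_G(u)=4$, label $u$'s other two neighbors $u_1,u_2$; since the face $[uvw]$ forces $v,w$ to be consecutive around $u$, the cyclic order is (up to relabeling) $v,w,u_1,u_2$, and the only pair of opposite neighbors avoiding $Z$ is $(w,u_2)$. Neither $w$ nor $u_2$ lies in $Z$: if one equals some $v_i$, then $v_i$ would be adjacent to $u$, contradicting $v_i$ being isolated in $N_G(v)$. Moreover $wu_2\notin E(G)$, for otherwise $v,u,u_2,w$ would be a $4$-cycle. Thus Lemma~\ref{lem-p45} identifies $w$ and $u_2$ into a new vertex $v^*$, keeping the graph in $\mathcal{P}_{4,5}$ and preserving $G[D]$; after making $uw$ and $uu_2$ straight, we extend $\phi|_D$ to $G'$ by minimality and pull back to $G-Z$ via $\phi(w)=\phi(u_2)=\phi(v^*)$, obtaining $|L'(u)|\ge 3m$.

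It remains to $(\mathcal{H}^{\phi},2m)$-color $G[Z]$. In the poor $4$-vertex case this is a claw with center list of size $\ge 5m$ and leaf lists of size $\ge 3m$, so Lemma~\ref{lem-claw} applies directly. In the poor $3$-vertex case the path $uvv_1$ has lists of sizes $3m,5m,3m$, and a short direct argument suffices: by Lemma~\ref{lem-choosable} we may assume $L(u),L(v_1)\subseteq L(v)$, whence $|L(u)\cap L(v_1)|\ge m$; take a common $m$-subset for $\phi(u)$ and $\phi(v_1)$, complete each to size $2m$ from its own list, and pick $\phi(v)$ from the $\ge 2m$ colors of $L(v)\setminus(\phi(u)\cup\phi(v_1))$. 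Either way $\phi\cup\psi$ extends $\phi|_D$ to $G$, contradicting minimality. The main obstacle is the identification step in the degree-$4$ subcase: verifying that the opposite-pair candidate $(w,u_2)$ lies outside $Z$ and is non-adjacent relies on both the isolated-neighbor condition on the $v_i$ and the absence of $4$-cycles in $G$.
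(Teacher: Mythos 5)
Your proof is correct and follows essentially the same strategy as the paper's: choose a set $Z$ of internal vertices, extend $\phi|_D$ to $G-Z$ (using the identification of Lemma~\ref{lem-p45} to boost $|L'(u)|$ when $d_G(u)=4$), and then color $G[Z]$ via Lemma~\ref{lem-claw}. The one genuine divergence is in the sub-case where $v$ is a poor $4$-vertex and $d_G(u)=4$: the paper disposes of this by invoking Lemma~\ref{lem:11} (adjacent $4$-vertices) to conclude it simply cannot occur, and hence only ever needs the identification for a path-shaped $Z$; you instead handle this sub-case head-on, taking $Z=\{u,v,v_1,v_2\}$ (a claw) and performing the same $w/u_2$ identification to get $|L'(u)|\ge 3m$, after which Lemma~\ref{lem-claw} applies with center $v$. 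Your route is a bit more uniform and self-contained (it does not rely on Lemma~\ref{lem:11}), at the cost of a second identification argument; the paper's route is shorter given that Lemma~\ref{lem:11} is already in hand. Your verifications that $w,u_2\notin Z$ and $wu_2\notin E(G)$ are correct, and your direct argument for the $3m,5m,3m$ path is fine, though it could equally be folded into Lemma~\ref{lem-claw} by adding a dummy leaf.
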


\begin{proof}
    Assume $u$ is internal and $d_G(u) \le 4$. 

    Assume first that $d_G(u)=3$. If $v$ is a poor 3-vertex, and $v'$ is an isolated internal 3-neighbor of $v$, then let  $Z=\{u,v, v'\}$. If $v$ is a poor 4-vertex, and $v', v''$ are isolated internal 3-neighbors of $v$, then let  $Z=\{u,v, v', v''\}$.  Then $\phi|_D$ can be extended to $G-Z$, and by Lemma \ref{lem-claw}, $G[Z]$ has an $(\mathcal{H}^{\phi}, 2m)$-coloring $\psi$. The union $\phi \cup \psi$ is an extension of $\phi|_D$ to $G$, a contradiction.  
    
   Assume $d_G(u)=4$.  By Lemma \ref{lem:11},  $v$ is not a poor 4-vertex, and hence is a poor 3-vertex. Let $v'$ be the isolated iternal 3-neighbor of $v$ and let  $u_1$ and $u_2$ be the two neighbors of $u$ not on $T$. We may assume that $u_1,u_2,v^{\prime}$ and $w$ are arranged around $uv$ in a cyclic order. Assume edges $uu_2$ and $uw$ are straight with respect to  $\mathcal{H}$.  Let $Z=\{u,v, v'\}$. Let $G'$ be obtained from $G-Z$ by identifying  $u_2$ with $w$ into a new vertex $u^*$.  By Lemma \ref{lem-p45},  $G^{\prime} \in \mathcal{P}_{4,5}$ and $G'[D] = G[D]$. So $\phi$ can be extended from $D$ to $G^{\prime}$. 
     
     Let $\phi(u_2)=\phi(w)=\phi(u^*) $. Then  $\mathcal{H}^{\phi}$ is an $f$-cover of $G[Z]$, where $f(v)= 5m$ and $f(u)=f(v^{\prime})=3m$. By Lemma \ref{lem-claw},  $G[Z]$ has an $(\mathcal{H}^{\phi},2m)$-coloring $\psi$. The union $\phi \cup \psi$ is an extension of $\phi|_D$ to $G$, a contradiction.
\end{proof}

\begin{lemma}
\label{lem:13}
    If $v$ is an internal   5-vertex, then $v$ has at most four isolated internal 3-neighbors.
\end{lemma}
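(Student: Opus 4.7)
The plan is to argue by contradiction in the standard way set up in the preceding lemmas, and then reduce the problem to an application of Lemma \ref{lem-star5}.

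Suppose to the contrary that the internal $5$-vertex $v$ has five isolated internal $3$-neighbors $v_1, v_2, v_3, v_4, v_5$. Since each $v_i$ is an isolated neighbor of $v$, no two of the $v_i$'s are adjacent, so $G[Z]$ is exactly the star $K_{1,5}$ with center $v$ and leaves $v_1,\dots,v_5$, where $Z = \{v, v_1,\dots,v_5\}$. Since all five neighbors of $v$ lie in $Z$, we have $d_{G-Z}(v) = 0$; and since each $v_i$ is a $3$-vertex with $v \in Z$, we have $d_{G-Z}(v_i) = 2$.

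By the minimality of $G$, the coloring $\phi|_D$ extends to an $(\mathcal{H}, 2m)$-coloring $\phi$ of $G - Z$ (note that all vertices of $Z$ are internal, so removing them leaves $D$ intact). Form the induced cover $\mathcal{H}^{\phi} = (L', M')$ of $G[Z]$ as in Section \ref{sec-preparation}. Then
\[
|L'(v)| \ge (7 - 2 \cdot d_{G-Z}(v))m = 7m, \qquad |L'(v_i)| \ge (7 - 2 \cdot d_{G-Z}(v_i))m = 3m
\]
for $i = 1,\dots,5$. Hence $\mathcal{H}^{\phi}$ is an $f$-cover of $G[Z]$, where $f(v) = 7m$ and $f(v_i) = 3m$ for each $i$ (we can of course discard spare colors to make this exactly an $f$-cover).

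By Lemma \ref{lem-star5}, the star $G[Z] \cong K_{1,5}$ is $(f, 2m)$-choosable, and by Lemma \ref{lem-choosable} it is therefore $(f, 2m)$-DP-colorable. Thus there is an $(\mathcal{H}^{\phi}, 2m)$-coloring $\psi$ of $G[Z]$, and $\phi \cup \psi$ is an $(\mathcal{H}, 2m)$-coloring of $G$ extending $\phi|_D$, contradicting the choice of $G$. This completes the proof.

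I do not expect any serious obstacle here, because the heavy lifting has been done in Lemma \ref{lem-star5}; the only check is the arithmetic on list sizes after the uncolored ball $Z$ is removed, and the observation that the isolated-neighbor hypothesis makes $G[Z]$ a star (so no extra edges among the $v_i$'s inflate the required list sizes).
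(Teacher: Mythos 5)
Your proof is correct and is essentially identical to the paper's: both set $Z = \{v, v_1, \dots, v_5\}$, extend $\phi|_D$ to $G - Z$ by minimality, observe that $\mathcal{H}^\phi$ is an $f$-cover of the star $G[Z]$ with $f(v) = 7m$ and $f(v_i) = 3m$, and then invoke Lemma~\ref{lem-star5}. The only difference is that you spell out the degree-counting and the fact that $G[Z]$ is a star, which the paper leaves implicit.
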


\begin{proof}
     Suppose for a contradiction that $v$ has five isolated internal 3-neighbors   $v_i$ $ (i=1,2,3,4,5)$. Let $Z=\left \{v, v_1,v_2,v_3,v_4,v_5  \right \} $. 
     
     Then there is an extension  $\phi$ of $\phi|_D$ to $G^{\prime}=G-Z$.  Note that $\mathcal{H}^{\phi}$ is an $f$-cover of $G[Z]$, where $f(v)=7m$ and $f(v_i)=3m$ for $i=1,2,\ldots, 5$.  It follows from Lemma \ref{lem-star5} that $G[Z]$ has an $(\mathcal{H}^{\phi}, 2m)$-coloring $\psi$. The union $\phi \cup \psi$ is an extension of $\phi|_D$ to $G$,   a contradiction.
\end{proof}

\begin{lemma}
\label{lem:14}
Let $T=[uvw]$ be a triangle in $G$ and $P=u^{\prime}uv$ an internal path in $G$. If $u$ is poor 3-vertex and  $d_G(v)=5$, then $v$ has at most two internal isolated 3-neighbors.
\end{lemma}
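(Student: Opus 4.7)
The plan is to follow the standard reducibility scheme from Section \ref{sec-property}: assume for contradiction that $v$ has three internal isolated $3$-neighbors $v_1, v_2, v_3$, set $Z = \{u', u, v, v_1, v_2, v_3\}$, extend $\phi|_D$ to $G - Z$ by minimality of $G$, and then apply Lemma \ref{lem-broom} to color $G[Z]$, thereby extending $\phi|_D$ to all of $G$ and contradicting the choice of $G$.

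First I would verify that $G[Z]$ is exactly the broom tree of Lemma \ref{lem-broom}. Since $u$ has degree $3$ and lies on the triangle $[uvw]$, its three neighbors are $v$, $w$, and one more vertex, which by the path $P = u'uv$ is $u'$. Because $u$ is poor and $v, w$ are non-isolated neighbors of $u$ (they span the triangle), $u'$ must be the internal isolated $3$-neighbor witnessing the poorness of $u$; in particular $u'$ is internal with $d_G(u') = 3$. Analogously, since $d_G(v) = 5$ and $u, w$ are non-isolated neighbors of $v$, the remaining three neighbors of $v$ are exactly $v_1, v_2, v_3$. Hence $G[Z]$ contains the edges $u'u, uv, vv_1, vv_2, vv_3$. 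That these are all the edges of $G[Z]$ follows from the isolated-neighbor conditions ($u' \not\sim v$, and no $v_i$ is adjacent to $u$, $w$, or another $v_j$), together with the fact that $G \in \mathcal{P}_{4,5}$: if $u' \sim v_i$, then $u u' v_i v$ would be a $4$-cycle in $G$.

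Next I would compute $d_{G-Z}(u') = 2$, $d_{G-Z}(u) = 1$, $d_{G-Z}(v) = 1$, and $d_{G-Z}(v_i) = 2$, which yields the residual list bounds $|L'(u')|, |L'(v_i)| \ge 3m$ and $|L'(u)|, |L'(v)| \ge 5m$. These match exactly the hypotheses of Lemma \ref{lem-broom}, with $u'$ playing the role of the leaf labelled $u$ in the lemma and $u$ playing the role of the intermediate vertex $w$. Lemma \ref{lem-broom} then supplies an $(\mathcal{H}^{\phi}, 2m)$-coloring $\psi$ of $G[Z]$, and $\phi \cup \psi$ is the desired extension. The main obstacle is purely combinatorial bookkeeping: selecting $Z$ so that $G[Z]$ is precisely the broom of Lemma \ref{lem-broom}, and ruling out the potential chord $u' v_i$ via the $4$-cycle-free hypothesis. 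No vertex identification (as in Lemmas \ref{lem:11} and \ref{lem:12}) is needed here, which makes this argument somewhat cleaner than the asymmetric cases treated earlier.
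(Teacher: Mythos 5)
Your proof is correct and follows the same route as the paper: set $Z=\{u',u,v,v_1,v_2,v_3\}$, extend $\phi|_D$ to $G-Z$ by minimality, observe that the residual cover gives $5m$ available colors at $u,v$ and $3m$ at $u',v_1,v_2,v_3$, and apply Lemma~\ref{lem-broom} to the induced broom. The extra bookkeeping you supply (verifying that $G[Z]$ really is the broom, ruling out the chord $u'v_i$ via the $4$-cycle hypothesis) is left implicit in the paper but is correct and worth stating.
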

\begin{proof}
    Suppose for a contradiction that $v$ has three internal 3-neighbors, say $v_i, (i=1,2,3)$. Let $u'$ be the isolated internal neighbor of $u$, and let  $Z=\left\{u^{\prime},u,v,v_1,v_2,v_3 \right\}$, $T=G[Z]$. By the minimality of $G$, $\phi$ can be extended from $D$ to $G^{\prime}=G-Z$.  Note that $\mathcal{H}^{\phi}$ is an $f$-cover of $G[Z]$, where   $f(u)=f(v)=5m$ and $f(u^{\prime})=f(v_1)=f(v_2)=f(v_3)=3m$. By Lemma \ref{lem-broom}, $G[Z]$ is DP-$(f,m)$-colorable. Thus $\phi$ can extend from $D$ to $G$, a contradiction. 
\end{proof}

\begin{lemma}
\label{lem:15}
    Let $T=[uvw]$ be a triangle in $G$. If $u$ is a poor 4-vertex and $v$ is an internal 5-vertex, then $v$ has at most two internal isolated 3-neighbors.
\end{lemma}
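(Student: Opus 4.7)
Suppose for contradiction that $v$ has three internal isolated $3$-neighbors $v_1,v_2,v_3$, and let $u_1,u_2$ denote the two internal isolated $3$-neighbors of $u$ guaranteed by the poorness of $u$. A short check using the absence of $4$-cycles and the isolatedness properties shows that $G[\{u,v,u_1,u_2,v_1,v_2,v_3\}]$ is the tree with edges $\{uv,\,uu_1,\,uu_2,\,vv_1,\,vv_2,\,vv_3\}$: for instance $uv_j\notin E(G)$ since otherwise the triangle $uvv_j$ would share the edge $uv$ with $[uvw]$, yielding a $4$-cycle $v_juwv$; and $u_iv_j\notin E(G)$ since $uu_iv_jv$ would be a $4$-cycle.

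The plan is to apply the identification device from Section~\ref{sec-preparation} at $z=v$ to collapse $w$ with a suitably chosen $v_j$. Since $d_G(v)=5$ and $w$ has exactly two cyclic neighbors in the cyclic order of $N_G(v)$ around $v$, at most two of $v_1,v_2,v_3$ are cyclically adjacent to $w$; hence at least one of them is not, and after relabeling we may assume this vertex is $v_3$. Isolatedness of $v_3$ at $v$ gives $v_3\not\sim w$, so identifying $v_3$ and $w$ creates no parallel edges. Taking $Z=\{u,v,u_1,u_2,v_1,v_2\}$ (all internal), $z=v$, $x=w$, $y=v_3$, and using neighbors of $v$ from each of the two arcs between $w$ and $v_3$ as the lemma's $u$ and $v$, Lemma~\ref{lem-p45} produces a graph $G'\in\mathcal{P}_{4,5}$ with $G'[D]=G[D]$.

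Permuting colors so that both $vw$ and $vv_3$ are straight with respect to $\mathcal{H}$, the minimality of $G$ furnishes an $(\mathcal{H},2m)$-coloring of $G'$ extending $\phi|_D$; setting $\phi(w)=\phi(v_3)=\phi(v^\ast)$ yields an extension of $\phi|_D$ to $G-Z$. For the residual cover $\mathcal{H}^\phi$ on $G[Z]$: each of $u_1,u_2,v_1,v_2$ has exactly one neighbor in $Z$, so $|L'(u_i)|,|L'(v_j)|\ge 3m$; the vertex $u$ has $w$ as its only outside neighbor, so $|L'(u)|\ge 5m$; and the two outside neighbors $w,v_3$ of $v$ are assigned the same color set along straight edges and therefore forbid the same $2m$ colors, yielding $|L'(v)|\ge 5m$.

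The induced subgraph $G[Z]$ is now precisely the double claw on $\{u,v,u_1,u_2,v_1,v_2\}$ with edge set $\{uv,uu_1,uu_2,vv_1,vv_2\}$, and the residual list sizes match the hypotheses $f(u)=f(v)=5m$, $f(u_i)=f(v_j)=3m$ of Lemma~\ref{lem-doublestar2}. That lemma therefore supplies an $(\mathcal{H}^\phi,2m)$-coloring $\psi$ of $G[Z]$, and $\phi\cup\psi$ is an $(\mathcal{H},2m)$-coloring of $G$ extending $\phi|_D$, contradicting the minimality of $G$. The only delicate point in this plan is the cyclic-order argument selecting which $v_j$ may safely be identified with $w$ so that the hypotheses of Lemma~\ref{lem-p45} are met; everything else is a routine combination of the residual-list accounting with the double-star colorability result from Section~\ref{sec-f2m}.
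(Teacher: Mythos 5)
Your proposal is correct and follows essentially the same strategy as the paper: identify $w$ with one of the $v_j$'s that is not cyclically adjacent to $w$ around $v$ (the paper fixes the cyclic order $u,w,v_1,v_2,v_3$ and identifies $w$ with $v_2$), invoke Lemma~\ref{lem-p45}, pull the coloring of $G'$ back to $G-Z$, and then color the residual double claw $G[Z]$ with the list sizes $5m,5m,3m,3m,3m,3m$. Your citation of Lemma~\ref{lem-doublestar2} is in fact the correct one for these list sizes; the paper's text cites Lemma~\ref{lem-doubleclaw} at this point, which appears to be a minor citation slip.
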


\begin{proof}
    Suppose for a contradiction that $v$ has three internal isolated 3-neighbors, say $v_i, i=1,2,3$. We may assume that $u,w,v_1,v_2$ and $v_3$ are arranged around $v$ in this cyclic order, and assume that $vv_2, vw$ are straight with respect to  $\mathcal{H}$.
    Let $u_i,i=1,2$, be the two isolated 3-neighbors of $u$. Let $Z=\left \{ u_1,u_2,u,v,v_1,v_3 \right \} $,
    and let $G'$ be obtained from $G-Z$ by identifying  $w$ with $v_2$ into a new vertex $v^*$. 
\par
    By Lemma \ref{lem-p45} and the mimimality of $G$, $G^{\prime}$ has a $(7m,2m)$-DP-coloring which extends $\phi$ from $D$ to $G^{\prime}$. 
    
    Let $\phi(w)=\phi(v_2)=\phi(v^*)$. Then $\mathcal{H}^{\phi}$ is an $f$-cover of $G[Z]$, where $f(u)=f(v)=5m$ and $f(z)=3m$ for $z\in \left \{ u_1,u_2,v_1,v_3 \right \} $. Observe that $G[Z]$ is a double claw. By Lemma \ref{lem-doubleclaw}, $G[Z]$ has an $(\mathcal{H}^{\phi},2m)$-coloring $\psi$. Hence the union $\phi \cup \psi$ is an extension of $\phi|_D$ to $G$, a contradiction completing the proof.
\end{proof}

\section{Discharging}
\label{sec-discharging}

This section completes the proof of Theorem \ref{Th:2} by deriving a contradiction with a discharging procedure. The \textit{initial charge function} $\operatorname{ch}$ is defined as:  $\operatorname{ch}(v)=2 d_G(v)-6$ for $v \in V$, $\operatorname{ch}(f)=d(f)-6$ for $f \in F \backslash \left\{f_{0} \right\}$ and  $\operatorname{ch}(f_{0})=d\left(f_{0}\right)+6$. Then 

\begin{gather*}
    \sum_{x \in V \cup F} \operatorname{ch}(x)=0
\end{gather*}

We apply the following discharging rules to obtain the final charge function  $\operatorname{ch}^{\prime}$.

\subsection{Discharging rules}
\textbf{R1.} Every internal 3-face receives 1 from each of its vertices.
\par
\textbf{R2.} Let $v$ be an internal triangular 3-vertex and $v^{\prime}$ the unique isolated neighbor of $v$.
\par
\quad \quad(1) If $ v $ is poor, then $ v $ receives $ \frac{1}{2} $ from each of its two non-isolated neighbors.
\par
\quad \quad(2) If $ v $ is not poor, then $ v $ receives 1 from $ v^{\prime} $.
\par
\textbf{R3.} Every poor 4-vertex receives $ \frac{1}{2} $ from each of its non-isolated neighbors.
\par
\textbf{R4.} Every external 2-vertex receives 2 from $ f_{0} $.
\par
\textbf{R5.} Let $ v $ be an external 3-vertex.
\par
\quad \quad(1) If $ v $ is triangular, then $ v $ receives $ \frac{3}{2} $ from $ f_{0} $.
\par
\quad \quad(2) If $ v $ is non-triangular, then $ v $ receives 1 from $ f_{0} $.
\par
\textbf{R6.} Let $ v $ be an external 4-vertex.
\par
\quad \quad(1) If $ v $ is 2-triangular, then $ v $ receives 1 from $ f_{0} $.
\par
\quad \quad(2) If $ v $ is 1-triangular, then $ v $ receives $ \frac{1}{2}$ from $f_{0}$.

\subsection{Analysis of the final charges}
%\noindent We show that $ {\textstyle \sum_{x\in V\cup F}} ch^{\prime}(x)>0$.

We shall prove that   $ \sum_{x \in V \cup F} \operatorname{ch}^{\prime}(x) >0$.
On the other hand,  $\sum_{x \in V \cup F} \operatorname{ch}^{\prime}(x) = \sum_{x \in V \cup F} \operatorname{ch} (x) =0$. This  is a contradiction.  

 For each face $f$,  if $d(f) = 3$,  then by R1, $\operatorname{ch}^{\prime}(f)=\operatorname{ch}(f)+3\times1=d(f)-6 + 3=0$. Otherwise $d(f)\ge 6$, and  $\operatorname{ch}^{\prime}(f)=\operatorname{ch}(f)=d(f)-6\ge 0$.

Next we show that $ch'(v) \ge 0$ for each vertex $v$.
For a vertex $v$, let $v_i$, $i = 1,\ldots,d_G(v)$ be the neighbors of $v$ in a cyclic order, 
and $f_i$, $i = 1,\ldots,d_G(v)$, be the face incident to $v_ivv_{i + 1}$ 
where the indices are taken modulo $d_G(v)$.  If $v$ is a boundary vertex, then let $v_1, v_{d_G(v)}$ be the two boundary neighbor of $v$. The following observation will be frequently used.

\begin{observation}
    \label{ob-triangle}
    If $T=[uvw]$ is a triangle and $v$ is a vertex of degree at least 4, then $T$ receives 1 charge from $v$. By Lemma \ref{lem:12}, at most one of $u,w$ is poor and will receive $1/2$ charge from $v$. Thus the total charge send from $v$ to $T$ and $u,w$ is at most $3/2$.
\end{observation}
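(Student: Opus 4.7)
The plan is to check the three clauses of the observation in turn against the discharging rules R1--R3 and Lemma~\ref{lem:12}. The first clause is immediate: since $T=[uvw]$ is a 3-face incident to $v$, rule R1 sends exactly $1$ unit of charge from $v$ to $T$.

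For the charge sent from $v$ to its neighbors $u$ and $w$, I would observe that the only rules capable of transferring charge from a $4^{+}$-vertex to a specific neighbor are R2(1) and R3, each giving $1/2$ to a poor 3-vertex or a poor 4-vertex from every non-isolated (equivalently, triangular) neighbor. Because $uv$ and $vw$ are edges of the triangle $T$, the vertex $v$ is a triangular neighbor of both $u$ and $w$, so $v$ contributes exactly $1/2$ to whichever of $u,w$ is poor and nothing otherwise. Note that R2(2), which transfers $1$, requires the sender to be the \emph{isolated} neighbor of the recipient, so it cannot fire along $vu$ or $vw$.

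The crux of the observation is then the assertion that $u$ and $w$ cannot both be poor; this is precisely where Lemma~\ref{lem:12} enters, and is the only non-routine step. Since a poor vertex is by definition an internal $3$- or $4$-vertex, if $u$ and $w$ were both poor they would both be internal with $d_G(u), d_G(w) \in \{3,4\}$. Applying Lemma~\ref{lem:12} to $T=[uvw]$, with $u$ cast in the role of the poor vertex of the lemma and $w$ in the role of its internal neighbor, forces $d_G(w) \ge 5$, contradicting the poorness of $w$. Hence at most one of $u,w$ is poor, the total charge from $v$ to $\{u,w\}$ is at most $1/2$, and adding the unit sent to $T$ yields the announced bound $1+\tfrac12=\tfrac32$. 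The main care required is simply to verify (as above) that R2(2) cannot apply along the edges of $T$ and to keep the labels straight when invoking Lemma~\ref{lem:12}.
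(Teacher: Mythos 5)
Your proof is correct and follows the same reasoning the paper intends; the observation is essentially self-justified by R1, the definition of poor, and Lemma~\ref{lem:12}, exactly as you spell out. Your extra care in noting that R2(2) cannot fire along the triangular edges $vu$, $vw$ (because $v$ is a non-isolated neighbor of $u$ and $w$) and in explicitly relabeling the triangle when invoking Lemma~\ref{lem:12} is a helpful expansion of the paper's terse statement, but it is the same argument.
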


\begin{claim}
    For any internal vertex $v$, $\operatorname{ch}^{\prime}(v)\ge 0$.
\end{claim}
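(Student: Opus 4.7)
The plan is a case analysis on $d=d_G(v)$, using two structural facts. First, the absence of $4$-cycles forces $G[N_G(v)]$ to be a matching, so $v$ lies in at most $\lfloor d/2\rfloor$ triangles, each sharing only $v$ with the others. Second, by Observation \ref{ob-triangle}, each triangle through $v$ contributes at most $3/2$ to the outflow from $v$ (namely $1$ to the $3$-face by R1, plus at most $\tfrac12$ to the unique possible poor vertex among the other two). Face charges are handled at a glance: R1 makes every $3$-face neutral, and every other face already has non-negative initial charge, so only the vertex inequality needs work.

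For $d=3$ (initial charge $0$), if $v$ is non-triangular then any internal triangular $3$-neighbour of $v$ automatically has $v$ (an internal $3$-vertex) as its isolated neighbour and is therefore poor, so R2(2) never fires out of $v$ and $v$ gives nothing; the other rules require $v$ to share a triangle with a neighbour, which is impossible. If $v$ is triangular, Lemma \ref{lem:12} rules out poor neighbours in its triangle, the only outflow is $1$ via R1, and this is restored by R2: either $\tfrac12$ from each non-isolated triangle neighbour when $v$ is poor, or $1$ from the isolated neighbour when $v$ is not. For $d=4$ (initial charge $2$), Lemma \ref{lem:12} again forbids poor neighbours in triangles through $v$ (since $d_G(v)<5$), and Lemma \ref{lem:10} caps the isolated internal $3$-neighbours of $v$ at two, bounding R2(2) outflow by $2$; the critical subcase is $t=1$ with both isolated neighbours internal $3$-vertices, where $v$ itself becomes poor and R3 refunds $\tfrac12$ from each of the two non-isolated triangle neighbours, restoring $\operatorname{ch}^{\prime}(v)=0$. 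For $d=5$ (initial charge $4$), Lemma \ref{lem:13} bounds isolated internal $3$-neighbours by four, and Lemmas \ref{lem:14}--\ref{lem:15} sharpen this to two whenever some triangle through $v$ contains a poor vertex; combining this with Observation \ref{ob-triangle}, each of $t\in\{0,1,2\}$ yields outflow at most $4$.

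For $d\ge 6$ no further lemma is needed: Observation \ref{ob-triangle} gives outflow at most $3t/2$ from triangles, and R2(2) contributes at most $1$ per isolated neighbour, so the total outflow is at most $3t/2+(d-2t)=d-t/2\le d$, hence $\operatorname{ch}^{\prime}(v)\ge (2d-6)-d=d-6\ge 0$. The main obstacle is the bookkeeping in the $d=4$ and $d=5$ cases, where the final inequality is tight in several subcases and the discharging rules interact. The recurring mechanism that rescues us is always the same: whenever R2(2) is about to send its maximum charge out of $v$, one of Lemmas \ref{lem:10}, \ref{lem:13}, \ref{lem:14}, or \ref{lem:15} caps the number of qualifying recipients; and whenever $v$ has itself become poor, R3 recovers the deficit from its non-isolated triangle neighbours.
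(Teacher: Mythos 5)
Your proposal is correct and follows essentially the same case analysis on $d_G(v)$ and the same key tools (Observation \ref{ob-triangle}, Lemmas \ref{lem:10}, \ref{lem:12}, \ref{lem:13}, \ref{lem:14}, \ref{lem:15}, and the $4$-cycle-free matching structure on $N_G(v)$) as the paper. You in fact supply slightly more detail than the paper in the $d=3$ case, where you explicitly observe that a triangular $3$-neighbour of a non-triangular internal $3$-vertex is automatically poor (so R2(2) never makes $v$ pay), a point the paper leaves implicit.
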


We consider four cases.
\begin{enumerate}[label=(\arabic*)]
\item $d_G(v)=3$. 
If $v$ is non-triangular, by our rules, no charge is discharged from or to $v$, hence $\operatorname{ch}^{\prime}(v)=\operatorname{ch}(v)=2d_G(v)-6 = 0$. If $v$ is triangular and $f_1=T=[v_1v_2v]$ is a triangle, then $v_3$ is the unique isolated neighbor of $v$.  So $f_1=T$ receives $1$ from $v$ by R1. If $v$ is poor, then by R2(1), each of $v_1$ and $v_2$ sends $\frac{1}{2}$ to $v$. Hence $\operatorname{ch}^{\prime}(v)=\operatorname{ch}(v)-1 + 2\times\frac{1}{2}=2d_G(v)-6 = 0$.  If $v$ is not poor, then by R2(2), $v_3$ sends $1$ to $v$. Hence $\operatorname{ch}^{\prime}(v)=\operatorname{ch}(v)-1 + 1=2d_G(v)-6 = 0$.
\item $d_G(v)=4$. 
If $v$ is non-triangular, then by Lemma \ref{lem:10}, $v$ has at most two internal $3$-neighbors. 
If $v_i$ is not an internal $3$-neighbor of $v$, then $v_i$ receives nothing from $v$. If $v_i$ is an internal $3$-neighbor of $v$, then $v_i$ receives $1$ or $0$ depending on $v_i$ being triangular or not by rule R2(2). So $\operatorname{ch}^{\prime}(v)\ge \operatorname{ch}(v)-2\times1 = 0$.

If $v$ is $1$-triangular and $f_1=T=[v_1v_2v]$ is a triangle, then by R1, $v$ sends $1$ to $T$. 
Observe that each of $v_3$ and $v_4$ may be a triangular internal $3$-vertex which receives $1$ from $v$ by R2(2). If $v$ is a poor $4$-vertex, by R3, $v$ receives $\frac{1}{2}$ from each of $v_1$ and $v_2$. Hence $\operatorname{ch}^{\prime}(v)\ge \operatorname{ch}(v)-1-2\times1 + 2\times\frac{1}{2}=0$. Otherwise,  at least one of $v_3$ and $v_4$ is not an internal $3$-vertex which receives nothing from $v$. Hence $\operatorname{ch}^{\prime}(v)\ge \operatorname{ch}(v)-1 - 1=0$.

If $v$ is $2$-triangular, say $f_1$ and $f_3$ are t5riangular faces, then  each of $f_1,f_3$ receives $1$ from $v$ by R1. 
By Lemma \ref{lem:12}, each of $v_i$, $i = 1,2,3,4$, is not poor and receives nothing from $v$. So $\operatorname{ch}^{\prime}(v)\ge \operatorname{ch}(v)-1 - 1=0$.

\item $d_G(v)=5$. 
If $v$ is non-triangular, then a neighbor $v_i$ of $v$ receives   $1$ from $v$ by R2(2) if and only if $v_i$ is a triangular $3$-vertex. By Lemma \ref{lem:13}, $v$ has at most four internal $3$-neighbors. 
So $\operatorname{ch}^{\prime}(v)\ge \operatorname{ch}(v)-4\times1=2d_G(v)-6-4\times1 = 0$. 

If $v$ is $1$-triangular, say $f_1=[v_1v_2v]$ is a triangular face incident with $v$, then 
by Observation \ref{ob-triangle}, $f_1$ and $v_1, v_2$ receive at most $3/2$ charge from $v$.

By Lemmas \ref{lem:14} and \ref{lem:15}, $v$ has at most two internal isolated $3$-neighbors,  which may receives $1$ from $v$ by R2(2). So $\operatorname{ch}^{\prime}(v)\ge \operatorname{ch}(v)-\frac{3}{2}-2\times1=\frac{1}{2}>0$. 

If $v$ is $2$-triangular, say $f_1=[v_1v_2v]$ and $f_3=[v_3v_4v]$ are two triangles incident with $v$, then by Observation \ref{ob-triangle}, for each $i \in \{1,3\}$,  $f_i$ together with $v_i, v_{i+1}$ receives at most $3/2$ charge from $v$, and $v_5$  receives at most 1 from $v$ by R2(2). So $\operatorname{ch}^{\prime}(v)\ge  \operatorname{ch}(v)-2\times1 - 2\times\frac{1}{2}-1 = 0$.

\item $d_G(v) \ge 6$. By Observation \ref{ob-triangle}, on the average,
each neighbor of $v$ receives at most 1 from $v$.
So $\operatorname{ch}^{\prime}(v)\ge  \operatorname{ch}(v)-d_G(v)\times1 = 2d_G(v)-6 - d_G(v)=d_G(v)-6\ge 0$.
\end{enumerate}

\begin{claim}
    For any $v\in V(D)$, $\operatorname{ch}^{\prime}(v)\ge 0$.
\end{claim}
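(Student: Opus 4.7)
My plan is to split on $d_G(v)$ and compare what $v$ discharges (via R1--R3) against what it receives from $f_0$ (via R4--R6). Two structural facts will be used throughout. First, since $D$ has no chord, the two boundary neighbors $v_1,v_{d_G(v)}$ of $v$ are non-adjacent, so every triangle at $v$ uses at least one internal neighbor, and in particular no boundary neighbor of $v$ is ever charged by $v$. Second, $4$-cycle-freeness forbids two triangles at $v$ from sharing an edge at $v$ (the union of such a pair contains a $4$-cycle through $v$), so $v$ is at most $\lfloor d_G(v)/2\rfloor$-triangular; combined with Lemma~\ref{lem:12} this also yields Observation~\ref{ob-triangle}: for each triangle $T$ at $v$ the total charge sent from $v$ to $T$ and to the two other vertices of $T$ is at most $3/2$.

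The cases $d_G(v)\in\{2,3\}$ are straightforward: a $2$-vertex is non-triangular and R4 supplies the entire deficit of $2$; a $3$-vertex with internal neighbor $v_3$ either has no triangle (then $v_3$ is an isolated neighbor of $v$, so R2(2) sends at most $1$, matched by R5(2)) or sits on exactly one triangular face $vv_iv_3$ (then R1 sends $1$ and R2(1)/R3 sends at most $1/2$, matched by R5(1)). For $d_G(v)=4$ I split on the triangle count at $v$. The forbidden edge $v_1v_4$ (chord of $D$) rules out $vv_1v_4$ as a triangle, and the shared-edge rule rules out the triangle pairs $vv_1v_2,vv_2v_3$ and $vv_2v_3,vv_3v_4$, leaving only $\{vv_1v_2\}$, $\{vv_3v_4\}$, $\{vv_2v_3\}$, and $\{vv_1v_2,vv_3v_4\}$ as the admissible non-empty triangle sets. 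A direct computation using Lemma~\ref{lem:12} (to limit simultaneous poor non-isolated neighbors) and R2(2) (for isolated internal neighbors) yields discharged totals of at most $2$, $5/2$, $3$ in the $0$-, $1$-, $2$-triangular sub-cases respectively, matched by R6 contributions of $0$, $1/2$, $1$ on top of the initial charge $2$.

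For $d_G(v)\ge 5$, $v$ receives nothing from $f_0$, and I use a uniform bound. Let $t$ be the number of internal triangular faces at $v$ and $t_b\in\{0,1,2\}$ be the number of boundary neighbors of $v$ incident to such a triangle; note $t\ge t_b$ because the two boundary neighbors of $v$ are non-adjacent and hence cannot share a triangle at $v$. By Observation~\ref{ob-triangle} the internal triangles and their non-isolated neighbors absorb at most $3t/2$ from $v$, and each of the $d_G(v)-2-(2t-t_b)$ isolated internal neighbors absorbs at most $1$ via R2(2) (boundary neighbors absorb $0$). The total discharged is therefore at most
\[ \tfrac{3t}{2}+d_G(v)-2-2t+t_b \;=\; d_G(v)-2-\tfrac{t}{2}+t_b \;\le\; d_G(v)-1,\]
the maximum being attained at $t=t_b=2$. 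Hence $\operatorname{ch}'(v)\ge 2d_G(v)-6-(d_G(v)-1)=d_G(v)-5\ge 0$. The main obstacle I anticipate is the $d_G(v)=4$ case analysis, where one must exhibit a chord of $D$ or a $4$-cycle to eliminate each unwanted triangle pattern; by contrast, the degree-$\ge 5$ argument is clean and uniform.
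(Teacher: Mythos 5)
Your proposal follows the same degree-by-degree discharging analysis as the paper, and the arithmetic in each sub-case you do treat is correct. However, there is a genuine gap in a structural premise you rely on throughout: the inference ``since $D$ has no chord, the two boundary neighbors $v_1,v_{d_G(v)}$ of $v$ are non-adjacent'' is false when $|D|=3$. In that case $f_0$ itself is a triangle, the boundary neighbors of $v$ are adjacent (their common edge is an edge of $D$, not a chord), and every edge of $D$ is triangular. This possibility is real: $G\in\mathcal P_{4,5}$ and $D$ good forces $|D|\in\{3,6,7\}$, and the base case of the whole induction (extending from a triangle $C$) uses $|D|=3$.

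This premise silently drives your case split. For $d_G(v)=2$ you declare the vertex non-triangular, which is wrong when $|D|=3$. For $d_G(v)=3$ you claim any triangle at $v$ must contain the internal neighbor, which excludes the sub-case where the unique triangle at $v$ is $f_0$. For $d_G(v)=4$ you explicitly rule out the triangle $vv_1v_4$ as a ``chord'' of $D$, which eliminates $f_0$ from your enumeration of admissible triangle patterns; consequently your list of triangle sets and the resulting discharge totals $(2,\tfrac52,3)$ do not literally cover the configurations $\{f_0\}$ and $\{f_0,vv_2v_3\}$. The paper handles all of these by a separate sub-case (``If $f_0$ is the unique triangle incident with $v$\dots'' and ``If $f_0$ is a triangle, say $f_0=T$\dots''), noting that $f_0$ absorbs nothing via R1 and that the external neighbors of $v$ on $f_0$ absorb nothing via R2--R3, which actually makes these sub-cases \emph{easier}. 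So your bounds still hold, but your argument does not establish them: you need to add the $f_0$-is-a-triangle sub-cases explicitly rather than excluding them by an invalid non-adjacency claim.

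A smaller point in the same vein: in the $d_G(v)\ge 5$ analysis, your justification ``$t\ge t_b$ because the two boundary neighbors of $v$ are non-adjacent'' is again wrong when $|D|=3$. The correct reason is that the two internal faces $f_1$ and $f_{d_G(v)-1}$ incident to $v$ and containing $v_1$, respectively $v_{d_G(v)}$, are distinct whenever $d_G(v)\ge 3$ (and the possible triangle $[v\,v_1\,v_{d_G(v)}]$, if present, is $f_0$, which does not count toward $t$). With that correction, the parametrized bound $\tfrac{3t}{2}+d_G(v)-2-2t+t_b\le d_G(v)-1$ is clean and in fact slightly tidier than the paper's informal counting for this case.
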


\begin{enumerate}[label=(\arabic*)]
    \item 
$d_G(v)=2$.

 By R4, $v$ receives 2 from $f_0$. 
By our rules, no charge is discharged out from a 2-vertex. 
Hence $\operatorname{ch}^{\prime}(v)=\operatorname{ch}(v)+2 = 2d_G(v)-6 + 2 = 0$.
    
  \item $d_G(v)=3$.
  If  $v$ is non-triangular, then by R5(2), $v$ receives 1 from $f_0$. 
     On the other hand, $v$ may have an internal isolated 3-neighbor which may be triangular, 
     hence it receives 1 from $v$ by R2(2). 
     So $\operatorname{ch}^{\prime}(v)\ge \operatorname{ch}(v)+1 - 1 = 0$.
    
    If   $v$ is triangular, then by R5(1), $v$ receives $\frac{3}{2}$ from $f_0$.
    As $G$ has no adjacent triangles, $v$ is incident with exactly one triangle. 
    If $f_0$ is the unique triangle incident with $v$, then each of the other two faces incident with $v$ is an internal $6^+$-face, which receives nothing from $v$, while the unique isolated neighbor of $v$ 
    may be an internal triangular 3-vertex which receives 1 from $v$ by R2(2). 
    Hence giving $\operatorname{ch}^{\prime}(v)\ge \operatorname{ch}(v)+\frac{3}{2}-1=\frac{1}{2}>0$. 
    
    Otherwise, the unique triangle incident with $v$ is internal which receives 1 from $v$ by R1, 
    while the other internal face incident with $v$ is a $6^+$-face which receives nothing from $v$, 
    and the unique internal non-isolated neighbor of $v$ may be poor, hence it receives $\frac{1}{2}$ from $v$ by R2(1) or R3. So $\operatorname{ch}^{\prime}(v)\ge \operatorname{ch}(v)+\frac{3}{2}-1-\frac{1}{2}=0$.
    
   \item  $d_G(v)=4$.
   
   If $v$ is non-triangular, then  $v$ has at most two internal neighbors, 
    each of which may be a poor 3-vertex. By R2(2), $\operatorname{ch}^{\prime}(v)\ge  \operatorname{ch}(v)-2\times1 = 0$.

     If $v$ is 1-triangular, then by  R6(2), $v$ receives $\frac{1}{2}$ from $f_0$. 
     If $f_0$ is the unique triangle incident with $v$, then every internal incident face of $v$ 
     is a $6^+$-face which receives nothing from $v$, while $v$ may have two poor 3-neighbors, 
     each of which receives 1 from $v$ by R2(2). So
     $\operatorname{ch}^{\prime}(v)\ge \operatorname{ch}(v)+\frac{1}{2}-2\times1=\frac{1}{2}>0$. 
     
     Otherwise the only incident triangle of $v$, denoted by $T = [uvw]$, is internal. 
      If both $u$ and $w$ are internal, 
     then the other two internal incident faces of $v$ are $6^+$-faces, 
     each of which receives nothing from $v$, while $T$ receives 1 from $v$ by R1. 
     By Lemma \ref{lem:12}, $T$ has at most one vertex being poor, if any, 
     which receives $\frac{1}{2}$ from $v$ by R2(1) or R3. 
     Hence $\operatorname{ch}^{\prime}(v)\ge \operatorname{ch}(v)+\frac{1}{2}-1-\frac{1}{2}=0$. 
     
     If exactly one of $u$ and $w$, say $u$, is internal, then $T$ receives 1 from $v$ by R1, 
     $u$ receives at most $\frac{1}{2}$ from $v$ by R2(1) or R3. 
     The internal neighbor of $v$ other than $u$ may be a poor 3-vertex, which receives 1 from $v$ 
     by R2(2). Hence $\operatorname{ch}^{\prime}(v)\ge \operatorname{ch}(v)+\frac{1}{2}-1-1- \frac{1}{2}=0$.

    If $v$ is 2-triangular, then by R6(1), $v$ receives 1 from $f_0$. 
    Let $T = [uvw]$ and $T' = [u'v'w']$ be the two triangles incident with $v$. 
    If $f_0$ is a triangle, say $f_0 = T$, then, $u$ and $w$ each receive nothing from $v$ 
    because they all are external, while $T'$ receives 1 from $v$ by R1, and at most one of $u'$ and $w'$
    is poor by Lemma \ref{lem:12}, which receives $\frac{1}{2}$ from $v$ by R2(1) or R3.
    Hence $\operatorname{ch}^{\prime}(v)\ge  \operatorname{ch}(v)+1 - 1-\frac{1}{2}=\frac{3}{2}>0$. 
    
    Otherwise both $T$ and $T'$ are internal and each may have a poor vertex. By R1 and R2(1) or R3, $\operatorname{ch}^{\prime}(v)\ge  \operatorname{ch}(v)+1 - 2\times(1 + \frac{1}{2}) = 0$.

   \item  $d_G(v)\ge 5$.

   By Observation \ref{ob-triangle}, for each triangular face $f$ of $v$, the charge send from $v$ to $f$ and the two neighbors of $v$ on the boundary of $f$ is at most $3/2$. To each other internal neighbor $v_i$ of $v$, the charge send from $v$ to $v_i$ is at most $1$. The two boundary neighbor of $v$ receive no charge from $v$. 
However, if a boundary neighbor, say $v_1$, is contained in a triangle $T=[vv_1v_2]$, then the total charge send from $v$ to $T$ and $v_2$ is at most $3/2$. Therefore, the total charge send out from $v$ is at most $d_G(v)-4 + 2 \times \frac 32 = d_G(v)-1$. So $ch'(v)  \ge ch(v)-d_G(v)+1 = d_G(v)-5 \ge 0$. 
\end{enumerate}

\begin{claim}
    $\operatorname{ch}^{\prime}(f_0)>0$.
\end{claim}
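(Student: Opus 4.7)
The plan is to case-analyze based on $|D|\in\{3,6,7\}$, which are the only admissible values since $G$ avoids $4$- and $5$-cycles. Denote by $w(v)$ the charge sent from $f_0$ to the boundary vertex $v$ under R4--R6, and let $s(v)=2-w(v)$ be the associated slack, nonnegative everywhere and positive off the 2-vertices. Rearranging, the desired inequality $\operatorname{ch}'(f_0)>0$ is equivalent to $\sum_{v\in V(D)}s(v)>|D|-6$.

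For $|D|=3$ this is trivial since $\sum s(v)\ge 0>-3$, giving $\operatorname{ch}'(f_0)\ge 9-6=3$. For $|D|\in\{6,7\}$ I would invoke two structural inputs. First, from properties (1)--(2) at the start of Section~\ref{sec-property}, $G$ is 2-connected with an internal vertex, so two internally disjoint paths from any internal vertex to $D$ force at least two $D$-vertices of $G$-degree $\ge 3$, each contributing slack $\ge 1/2$. Second, because $|D|\ge 4$ together with property (4) implies $D$ is chordless, every triangular 3-vertex $v\in V(D)$ has its incident triangle of the form $vv'v_1$, with $v'$ the unique internal neighbor of $v$ and $v_1$ a $D$-neighbor of $v$; the edge $v_1v'$ then forces $d_G(v_1)\ge 3$. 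The first input already closes $|D|=6$: $\operatorname{ch}'(f_0)\ge 12-(2\cdot 3/2+4\cdot 2)=1$.

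The substantive case is $|D|=7$, where slack must exceed $1$. Split on the number $k$ of $D$-vertices of $G$-degree $\ge 3$. If $k\ge 3$, slack is $\ge 3/2$ and I am done. If $k=2$, examining the slack values ($1/2$ for a triangular 3-vertex, $1$ for a non-triangular 3-vertex or a $2$-triangular 4-vertex, $3/2$ for a $1$-triangular 4-vertex, $2$ otherwise) shows that every combination gives slack $>1$ except when both degree-$\ge 3$ vertices are triangular 3-vertices. By the second structural input, this remaining case forces the two triangular 3-vertices $u,v$ to be adjacent on $D$ and to share a common internal neighbor $u'$, with $T=uvu'$ a face of $G$ (else $T$ is a separating good 3-cycle, contradicting property (3)).

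The main obstacle is precisely this tight configuration, since the slack count alone gives exact equality rather than strict inequality. I plan to eliminate it by a reducibility argument at $u'$, in the spirit of Section~\ref{sec-property}: starting from $Z=\{u'\}$, identify one or more pairs of pairwise non-adjacent neighbors of $u'$ (when $d_G(u')\ge 4$, the two cyclically-alternate neighbors furnished by Lemma~\ref{lem-p45}; when $d_G(u')=3$, a direct verification using the chordlessness of $D$ and the fact that $\{u,v\}$ is a 2-cut isolating $u'$'s ``inner gadget'' from the five $2$-vertices of $D$, to ensure the identification creates no new 4- or 5-cycles), then extend $\phi$ to the identified graph by minimality of $G$, and observe that the straightness of the identified edges at $u'$ increases $|L^\phi(u')|$ to at least $2m$, yielding the desired extension of $\phi$ to $G$ and the required contradiction. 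Verifying that these identifications are legal and furnish enough available colors at $u'$ in all degree sub-cases (in particular $d_G(u')=3$ and $d_G(u')=4$, which each need care, possibly by iterating the identification or combining it with an additional local reducibility from Section~\ref{sec-property}) is the heart of the proof.
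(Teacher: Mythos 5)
Your case split by $|D|\in\{3,6,7\}$ runs parallel to the paper's split by the number of triangular boundary edges, and the bookkeeping in the non-tight cases is correct: the slack reformulation, the $|D|=3$ and $|D|=6$ computations, and the observation that for $|D|=7$ with $k=2$ the only combination failing strict inequality is two triangular 3-vertices. You also correctly deduce that those two vertices $u,v$ must be consecutive on $D$ with a common internal neighbor $u'$ and $[uvu']$ a face. Up to here you and the paper agree in substance.

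The gap is in how you dispose of that tight configuration. You propose a fresh reducibility argument at $u'$, but this runs into trouble and is, more importantly, unnecessary. The trouble: if $d_G(u')=3$ then $u'$ has only one neighbor besides $u$ and $v$, so there is no pair of non-adjacent neighbors to identify; and $u,v$ are both on $D$ with their colors prescribed by $\phi|_D$, so they cannot participate in identifications without altering $G[D]$. After deleting $u'$ and extending $\phi|_D$ to $G-u'$, the residual list at $u'$ has size $\ge m$, which is short of the needed $2m$, and Lemma~\ref{lem-p45} gives you no lever since it requires $d_G(u')\ge 4$. You flag this as ``the heart of the proof'' needing verification; in fact this is exactly where the argument fails to close. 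What you should have done instead is push the 2-cut observation one step further: since every $D$-vertex other than $u,v$ has degree~2 and since $d_G(u)=d_G(v)=3$ with all three neighbors already named ($u$: two $D$-neighbors and $u'$; $v$: two $D$-neighbors and $u'$), every internal vertex other than $u'$ can reach $D$ only through $u'$. And there is at least one such vertex, because $u'$ is internal and so has degree $\ge 3$, hence a neighbor $y\notin\{u,v\}$, which is forced to be internal. Thus $u'$ would be a cut vertex of $G$, contradicting 2-connectedness (property~(1) in Section~\ref{sec-property}). This is precisely the short argument the paper uses: ``as $G$ is 2-connected, $f_0$ has at least one non-triangular $3^+$-vertex,'' which immediately restores the needed slack. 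No new reducibility lemma is required.
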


As $G$ is 2-connected and not a cycle, $f_0$ has at least two $3^+$-vertices. If $f_0$ is a triangle, then $f_0$ has at most one $2$-vertex, if any, which receives 2 from $f_0$ by R4, while each of the other two vertices receives at most $\frac{3}{2}$ from $f_0$ by R4-R6. Hence  $\operatorname{ch}^{\prime}(f_0)\ge  \operatorname{ch}(f_0)-2 - 2\times\frac{3}{2}= \operatorname{ch}(f_0)+6 - 5 = 4>0$. 

 If $f_0$ has no triangular edges, then $f_0$ has at least two $3^+$-vertices, each of which receives at 
 most 1 from $f_0$ by R5-R6. Hence  $\operatorname{ch}^{\prime}(f_0)\ge  \operatorname{ch}(f_0)-(d(f_0)-2)\times 2 - 2\times 1=8 - d(f_0)\ge  1>0$. 
 
 Assume that $f_0$ has at least one triangular edge. Observe that if $uv$ is an triangular edge, then the sum of the charge discharged from $f_0$ to $u$ and to $v$ is at most 3 by R5-R6. 
 
 If $f_0$ has at least two triangular edges, then $\operatorname{ch}^{\prime}(f_0)\ge  \operatorname{ch}(f_0)-(d(f_0)-4 )\times 2+2\times 3=8 - d(f_0)>0$. 
 
 Assume $f_0$ has exactly one triangular edge, say $uv$. If at least one of $u$ and $v$ is a $4^+$-vertex, then the sum of charge discharged from $f_0$ to $u$ and $v$ is at most  $\frac{3}{2}+\frac{1}{2}=2$  by R5 and R6(2).
Hence $\operatorname{ch}^{\prime}\left(f_{0}\right) \geq \operatorname{ch}\left(f_{0}\right)-\left(d\left(f_{0}\right)-2\right) \times 2-2=8-d\left(f_{0}\right)>0 $. 
If both $u$ and $ v $ are 3 -vertices, then as $ G $ is 2-connected, $f_{0}$ has at least one non-triangular $3^{+}$-vertex which receives at most 1 from $f_{0} $. Hence $\operatorname{ch}^{\prime}\left(f_{0}\right) \geq \operatorname{ch}\left(f_{0}\right)-\left(d\left(f_{0}\right)-3\right) \times 2-3-1=8-d\left(f_{0}\right)>0$.

This completes the proof of Theorem \ref{Th:2}.

\bibliographystyle{plain}
\bibliography{Reference} 

\end{document}